\newtheorem{thm}{Theorem}[section]
\newtheorem{cor}[thm]{Corollary}
\newtheorem{lem}[thm]{Lemma}
\newtheorem{prop}[thm]{Proposition}
\theoremstyle{definition}
\newtheorem{defn}[thm]{Definition}
\theoremstyle{remark}
\newtheorem{rem}[thm]{Remark}
\numberwithin{equation}{section}
\newcommand{\Real}{\mathbb R}
\begin{document}

\title[Hopf bifurcation at infinity]{Hopf bifurcation at infinity and dissipative vector fields of the plane}%
\author[B. Alarc\'on]{Bego\~{n}a Alarc\'on}%
\address[B. Alarc\'on]{Departamento de Matem\'atica Aplicada, Universidade Federal Fluminense, Rua M\'ario Santos Braga S/N, CEP 24020-140 Niter\'oi-RJ, Brasil }%
\email{balarcon@id.uff.br}%
\curraddr{}

\author[R. Rabanal]{Roland Rabanal}%
\address[R. Rabanal]{Departamento de Ciencias, Pontificia Universidad Cat\'{o}lica del Per\'{u},  Av. Universitaria 1801, Lima~32;  Per\'{u}}%
\email{rrabanal@pucp.edu.pe}%
\curraddr{}

\subjclass[2010]{Primary 37G10, 34K18, 34C23}%
\keywords{Hopf bifurcation, Planar vector fields}%

\date{\today}%
\dedicatory{Dedicated to the memory of Carlos Guti\'errez,\\
on occasion of the 05th anniversary of his death.}%
\commby{}
\begin{abstract}
This work deals with one--parameter families of differentiable (not necessarily $C^1$) planar vector fields for which the infinity reverses its stability as the parameter goes through zero. These vector fields are defined on the complement of some compact ball centered at the origin and have isolated singularities. They may be considered as linear perturbations at infinity of a vector field with some spectral property, for instance, dissipativity. We also address the case concerning linear perturbations of planar systems with a global period annulus. It is worth noting that the adopted approach is not restricted to  consider vector fields which are strongly dominated by the linear part. Moreover,  the  Poincar\'e compactification is not applied in this paper.
\end{abstract}

\maketitle


\section{Introduction}%
The main purpose of this paper is to deepen the study of qualitative theory of differential equations induced by differentiable vector fields of the plane which are not necessarily of class $C^1$. For instance, problems addressed in \cite{MR2096702, MR2287882,MR2266382} include  global injectivity, uniqueness of solutions and asymptotic stability of the point at infinity of the Riemann sphere $\Real\cup \{\infty\}$. These works, like many others in the literature, extend to the differentiable case previous results concerning $C^1-$vector fields.  Thus, the present paper pretends to extend to the differentiable case the work in  \cite{MR2257430}.  The authors of \cite{MR2257430} guarantee the change of stability at infinity of a one--parameter family of $C^1-$vector fields as the parameter goes through zero.
This behaviour was presented as Hopf bifurcation at infinity.
\par
Before \cite{MR2257430} the study of Hopf bifurcation at infinity had been approached only from two points of view. One deals with families which have finitely many parameters and are made up of continuous perturbations of linear systems. The other point of view deals with polynomial families. The former case uses the strong domination imposed by the linear part of the vector field on the continuous nonlinear term, which can have sub--linear growth  \cite{MR1028236,MR1124985,MR1364308,MR1446100,MR1780452}. The later point of view involves standard Poincar\'e compactification \cite{MR967475,MR1213941,MR1231467,MR1421060}.
\par
Both points of view are related to the existence of limit cycles of arbitrarily large amplitudes. Particularly, the case of polynomial systems can be associated to the sixteenth Hilbert problem, see for instance \cite{MR934515,MR909943,MR2093918,MR3130553}. In contrast, a different point of view was given in \cite{MR2257430}, where the authors are only interested in the change of  stability at infinity. This point of view has been strongly articulated in other works related to planar systems induced by differentiable vector fields \cite{MR1339178,MR2040002,MR2287882,MR2257430,MR3062761,MR3223368}. The main motivation of this approach was the definition of Hopf bifurcation at infinity given in \cite{MR909943} (see also \cite{MR934515,MR2142367}). Concretely, it is said in \cite{MR909943} that the polynomial family of vector fields $\{Z_{\mu}:-\epsilon<\mu<\epsilon\}$ presents a Hopf bifurcation at infinity when $\mu$ cross $0$ if\, \lq the infinity changes its stability\rq.
\par
The paper \cite{MR2257430} deals with a class of one--parameter family ${\{Z_{\mu}\; ;\; -\epsilon<\mu<\epsilon\}}$ of planar $C^1$ vector fields  defined on the  complement of an open ball centered at origin which are also free of singularities. The authors find the change of stability at infinity when the sign of  $\displaystyle{\int_{\Real^2}\; div(\hat{Z_\mu})}$ varies as the parameter crosses zero. In the context of \cite{MR2257430}, $\hat{Z_\mu}: \Real^2 \to \Real^2$ is any  $C^1-$ extension of $Z_\mu$  with divergence  Lebesgue almost--integrable (see Section \ref{sec_prelim} for details). The families considered in   \cite{MR2257430} are neither polynomial nor a perturbation of linear systems, consequently the work in \cite{MR2257430} contains the two point of view described above.
\par
We consider a one--parameter family $\{X_{\mu}(z)=X(z)+\mu z, \;\mu\in \Real\}$, where $X$ has isolated singularities and belongs to a class of differentiable  (not necessarily of class $C^1$) planar vector fields defined on the complement of some compact ball centered at the origin. The main motivation to address the existence of Hopf bifurcation at infinity for this family is the work given in \cite{MR2096702} and \cite{MR2287882}. In those papers the authors   prove the uniqueness of the positive trajectory starting at every point $p\in \Real^2$ due some conditions on the eigenvalues of the  Jacobian matrix of the differentiable vector field. In addition, results in \cite{MR2287882} allow us to focus on the integral $\displaystyle{\int_{\Real^2}\; div(\hat{X_\mu})}$ in order to find a change of stability at infinity. In this context, since the qualitative change at infinity of the family  $\{X_{\mu}(z)=X(z)+\mu z, \;\mu\in \Real\}$  may be exhibit from global extensions  $\hat{X_\mu}: \Real^2 \to \Real^2$, the main obstruction to describe the bifurcation is the existence of unbounded sequences of singular points. In order to avoid this phenomenon, we impose some extra condition on the extension $\hat{X_\mu}$ which imply that  every vector field of the family induces an injective map.  Consequently, every vector field of the family has at most one singular point in the plane.
\par
Observe that the one--parameter family considered in this paper is not necessarily a perturbation of linear systems in the sense of \cite{MR1028236,MR1124985,MR1364308,MR1446100,MR1780452}.
Moreover, \mbox{Corollary \ref{cor:1}} complements the results presented in~\cite{MR2257430} for $C^1-$vector fields because in our work the existence of isolated singularities of $X$  is allowed.
\par


\section{Notation and definitions} \label{sec_prelim}

Let $\overline{D}_\sigma=\{z\in\mathbb{R}^2:||z||\leq\sigma\}$ be the compact ball bounded by
$\partial\overline{D}_\sigma=\{z\in\mathbb{R}^2:||z||=\sigma\}$ with $\sigma>0$, and suppose that $X:\mathbb{R}^2\setminus\overline{D}_{\sigma}\to\mathbb{R}^2$ is a differentiable vector field, defined on the complement  of $\overline{D}_{\sigma}$ on $\mathbb{R}^2$.
Here, the  term  differentiable means \textit{Frechet differentiable} at each point $z\in\mathbb{R}^2\setminus\overline{D}_{\sigma}$ and $X^{\prime}(z):\mathbb{R}^2\to\mathbb{R}^2$ is the respective derivative. This derivative is defined as the bounded linear operator induced by the standard Jacobian matrix at $z$. This matrix is denoted by $DX(z)$, so $\mbox{det}(DX(z))$ and $\mbox{Trace}(DX(z))$ are the typical Jacobian determinant and  divergence, respectively.
On the other hand, if the point $q\in\mathbb{R}^2\setminus\overline{D}_{\sigma}$ is kept fixed, then a \textit{trajectory} of $X$ starting at $q$ is defined as the integral curve $I_q\ni t\mapsto\gamma_q(t)\in \mathbb{R}^2\setminus\overline{D}_{\sigma}$, determined by a maximal solution of the Initial Value Problem $\dot{z}=X(z),~ z(0)=q$.
Of course, it means that $\frac{d }{dt}\gamma_q(t)=X\big(\gamma_q(t)\big),\forall t\in I_q$ and $\gamma_q(0)=q$. In this context, it is useful to have a term that refers to the image of the solution.
Hence, we define the \textit{orbit} of  $\gamma_q$ to be the set $\{\gamma_q(t):t\in I_q\}$, and
we identify the trajectory with its orbit.
Similarly, ${\gamma^+_q}=\{\gamma_q(t):t\in I_q\cap[0,+\infty)\}$ and ${\gamma^-_q}=\{\gamma_q(t):t\in (-\infty,0]\cap I_q\}$ are the \textit{semi--trajectories} of $X$, and they are called \textit{positive} and \textit{negative}, respectively.
In consequence, $\gamma^-_q\cup \gamma^+_q=\gamma_q$, and each trajectory has its two limit sets, $\alpha(\gamma_q^-)$ and $\omega(\gamma_q^+)$.
These limit sets are well defined in the sense that they only depend on the respective solution.
And lastly, such a vector field induces a well defined \textit{positive semi--flow} (respectively \textit{negative semi--flow}), if the condition $q\in\mathbb{R}^2\setminus\overline{D}_{\sigma}$ implies the existence and uniqueness of $\gamma_q^{+}$ (respectively $\gamma_q^{-}$).
\par
The trajectories of a vector field may be unbounded.
One way  to obtain some information about the behavior of such solutions is to compactify the plane, so that the vector field is extended to a new manifold that contains the \lq points at infinity\rq.\,
In this context, the so called Alexandroff compactification has been most successful in the study of planar systems induced by $C^1$-vector fields, not necessarily polynomial (see for example \cite{MR0176180,MR934515,MR2287882,MR2257430,MR3062761}).
To describe the results, $\mathbb{R}^2$ is embedded in the Riemann sphere $\mathbb{R}^2\cup\{\infty\}$.
Consequently, $(\mathbb{R}^2\setminus\overline{D}_\sigma)\cup\{\infty\}$  is the subspace of  $\mathbb{R}^2\cup\{\infty\}$ with the induced topology, and \lq infinity\rq   ~refers to the point $\infty$ of $\mathbb{R}^2\cup\{\infty\}$.
Moreover, a vector field $X:\mathbb{R}^2\setminus\overline{D}_\sigma\to\mathbb{R}^2\setminus\{0\}$ (without singularities) can be extended to a map
\[
\hat{X}:\big(({\mathbb{R}^2\setminus\overline{D}_\sigma})\cup\{\infty\},\infty\big)\longrightarrow(\mathbb{R}^2,0)
\]
(which takes $\infty$ to $0$).
In this manner, all questions concerning the local theory of isolated singularities of planar vector fields can be formulated and examined in the case of the extended vector field $\hat{X}$, which coincides with $X$ on $\mathbb{R}^2\setminus\overline{D}_\sigma$.
For instance, if $\gamma_p^+\subset\mathbb{R}^2\setminus\overline{D}_\sigma$ is an unbounded semi--trajectory of $X:\mathbb{R}^2\setminus\overline{D}_\sigma\to\mathbb{R}^2$  with empty $\omega-$limit set, then we declare that ${\gamma_p^+}$ \textit{goes to infinity}, and we write $\omega({\gamma_p^+})=\infty$.
Similarly, $\alpha({\gamma_p^-})=\infty$ denotes that $\gamma_p^-$ \textit{comes from infinity}, and it means that $\gamma_p^-\subset\mathbb{R}^2\setminus\overline{D}_\sigma$ is an unbounded semi--trajectory whose $\alpha-$limit set is empty.
Therefore, it is also possible to talk about the phase portrait of $X$ in a neighborhood of $\infty$, as shown \cite[\mbox{Proposition 29}]{MR2287882}.

Throughout this paper, given $C\subset\mathbb{R}^2$, a closed (compact, no boundary) curve ($1-$manifold), $\overline{D}(C)$ (respectively $D(C)$) denotes the compact disc (respectively open disc) bounded by $C$.
Thus, the boundaries $\partial \overline{D}(C)$ and  $\partial D(C)$ are equal to $C$ besides homeomorphic to the circle $\partial D_1=\{z\in\mathbb{R}^2:||z||=1\}$.


\subsection{Index at infinity}

The  index at infinity was defined firstly for $C^1-$vector fields in \cite{MR1339178} and lately in \cite{MR2287882} for differentiable vector fields (not necessarily of class $C^1$). The relationship between this index and the stability of the point at infinity of the vector field is also stated in those works in both cases.

\begin{defn}[\cite{MR2287882}] Consider $X:\mathbb{R}^2\setminus\overline{D}_\sigma\to\mathbb{R}^2$ be a differentiable vector field. The \textit{index of $X$ at infinity}, denoted by  $\mathcal{I}(X)$,  is the number of the extended line $[-\infty,+\infty]$ given by
\[
\mathcal{I}(X)={\int_{\mathbb{R}^2}}\mbox{\rm
Trace}(D\widehat{X})dx\wedge dy,
\]
where $\widehat{X}:\mathbb{R}^2\to\mathbb{R}^2$ is a global differentiable vector field such that:
\begin{itemize}
    \item In some $\mathbb{R}^2\setminus\overline{D}_s$ with $s\geq\sigma$, both $X$ and $\widehat{X}$ coincide.
    \item $z\mapsto\mbox{\rm Trace}(D\widehat{X}(z))$ is Lebesgue almost--integrable on $\mathbb{R}^2,$ in the sense {of \cite{MR2287882}}.
\end{itemize}
\end{defn}
This index is a well--defined number in $[-\infty,+\infty]$, and it does not depend on the pair  $(\widehat{X},s)$ as shown in \cite[\mbox{Lemma 12}]{MR2287882} (see also \cite{MR2257430} for the $C^1$ case).


\subsection{Hopf bifurcation at infinity} \label{def:hopfbif}%

The definition of Hopf bifurcation at infinity for differentiable vector fields presented in this paper is based on the definition of Hopf bifurcation at infinity for $C^1$ vector fields given in \cite{MR1339178} and the definition of attractor/repellor given in \cite{MR2287882} for differentiable vector fields.
In \cite{MR2287882}, the authors relate the index at infinity of a differentiable vector field with the stability of the point at $\infty$ of the Riemann sphere.

\begin{defn}[\cite{MR2287882,MR3062761}]\label{def:atractor repellor}
The point  at infinity $\infty$ of the Riemann sphere $\mathbb{R}^2\cup\{\infty\}$ is an \textit{attractor} (res\-pec\-ti\-vely, a \textit{repellor}) for 
$X:\mathbb{R}^2\setminus\overline{D}_\sigma\to\mathbb{R}^2$ if:
\begin{itemize}
    \item There is a sequence of closed curves, transversal  to $X$ and tending to infinity.
        It means that for every $r\geq\sigma$ there exists a closed curve $C_r$ such that $D(C_r)$ contains $D_r$ and $C_r$ has transversal contact to each small local integral curve of $X$ at any $p\in C_r$.
    \item For some $C_s$ with $s\geq\sigma,$ all the trajectories $\gamma_p$ starting at a point $p\in
    \mathbb{R}^2\setminus\overline{D}(C_s)$ satisfy $\omega({\gamma^+_p})=\infty$, that is $\gamma_p^+$ goes to infinity (respectively, $\alpha({\gamma^-_p})=\infty$, that is  $\gamma_p^-$ comes from infinity).
\end{itemize}
\end{defn}

Notice that in the $C^1$ case, \mbox{Definition \ref{def:atractor repellor}} is equivalent to saying that the vector field $\hat{X}$ induced  by $X$ on the Riemann sphere is locally topologically equivalent in an open neighborhood  of the infinity to $z\mapsto -z$ (respectively,  $z\mapsto z$) at the origin \cite{MR1339178,MR2040002,MR2257430}.
\par
The authors of \cite{MR2287882} relate the index at infinity of a differentiable vector field with the stability of the point at $\infty$ of the Riemann sphere.

\begin{defn}[\cite{MR2257430}]\label{def:hopf}%
We will say that the family of differentiable vector fields $\big\{X_{\mu}:\mathbb{R}^2\setminus{\overline{D}_{\sigma}}\to\mathbb{R}^2\; :\;-\varepsilon<\mu<\varepsilon\big\}$ has at $\mu=0$ a \textit{Hopf bifurcation at $\infty$} if the following two conditions are satisfied:
\begin{itemize}
  \item For $\mu<0$ (resp. $\mu>0$), the vector field $X_{\mu}$ has a repellor at $\infty$, and for $\mu>0$ (resp. $\mu<0$), the vector field $X_{\mu}$ has an attractor at $\infty$.
  \item The vector field $X_{\mu}$ has no singularities in $\mathbb{R}^2\setminus{\overline{D}_{s}}$, for $-\varepsilon<\mu<\varepsilon$ and some $s\geq\sigma$.
\end{itemize}
\end{defn}

In order to capture the essential features of \mbox{Definition \ref{def:hopf}}, we remark that the element   $X_0$ might be unstable in the sense that the vector field $\hat{X_0}$ induced by $X_0$ on the Riemann sphere might be locally topologically equivalent in an open neighborhood  of the infinity to a center at the origin.
In this context, $X_0$ has first degree of instability \cite{MR0344606} in the sense that it is unstable whereas any vector field in  $\{X_{\mu}\}$, with $\mu$ sufficiently close to $0$, is either stable (i.e $\infty\in\{attractor, repellor\}$) or topologically equivalent to $X_0$  (see  Figure \ref{fig:mainf proposition}). In Section \ref{sec:free eigen},  we also impose some condition on $X_0$ in order to force a specific family $\{X_{\mu}\}$ to present a Hopf bifurcation at $\infty$ as a topological characterization of the bifurcation when $\infty$ reverses its stability.
\par
The authors of \cite{MR2257430} relate  the existence of a \emph{Hopf bifurcation at infinity} for a family of $C^1-$vector fields with the \emph{index at infinity} of each vector field of the family.

\begin{thm}[\cite{MR2257430}] \label{teoAGG07} Let $\, \{ Z_{\mu} :\Real^2\setminus D_{r}\; ;\; -\varepsilon < \mu
< \varepsilon\} \,$ be a family of $C^1-$vector fields such that:
\begin{enumerate}
 \item[(1)] $z\mapsto\mbox{\rm Trace}\big(DZ_{\mu}(z)\big)$ is Lebesgue almost--integrable on $\mathbb{R}^2\setminus{D_{r}}$,
  \item[(2)] $\mathcal{I}(Z_{\mu})$ is well defined, this index belongs to $[-\infty,+\infty]$, and
  \[
  {\int_{\sigma}^{+\infty}}\Upsilon_{\mu}(r) dr=+\infty,
  \]
  where $\Upsilon_{\mu}(r)=\inf\big\{||Z_{\mu}(z)||:||z||=r\big\}$,
  \item[(3)] $\mu\neq0$ implies that the product $\mu \cdot\mathcal{I}(Z_{\mu}) >0$,
  \item[(4)] $Z_{\mu}$ has no singularities, and the \textit{Poincar\'e index} at $\infty$ of the vector field
  \[\hat{Z}_{\mu}:\big(({\mathbb{R}^2\setminus{D_r}})\cup\{\infty\},\infty\big)\longrightarrow(\mathbb{R}^2,0)\]
  (which extends $Z_{\mu}$ at $\infty$) is less than or equal to $1$.\\

\end{enumerate}
Then $ \mu = 0 $ is a Hopf bifurcation at $ \infty $ of the family $ \{ X_{\mu}\} $.
\end{thm}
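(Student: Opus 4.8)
The plan is to verify directly the two requirements of \mbox{Definition \ref{def:hopf}}, the whole substance of the argument being the conversion of the sign of $\mathcal{I}(Z_\mu)$ into an attractor/repellor statement for $\infty$. The second bullet of \mbox{Definition \ref{def:hopf}} — that $Z_\mu$ has no singularities outside some $\overline{D}_s$ — is granted verbatim by hypothesis (4), so no work is required there and the entire issue reduces to establishing the reversal of stability as $\mu$ crosses $0$.

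For that reversal I would invoke the correspondence between the index at infinity and the local phase portrait at $\infty$ established in \cite{MR2287882}, and in the $C^1$ setting in \cite{MR1339178, MR2257430}. To fix the sign convention I would calibrate on the radial model $\dot z=\mu z$, for which $\mathrm{Trace}(DZ_\mu)\equiv 2\mu$ and $\|z(t)\|=e^{\mu t}\|z_0\|$: here $\mu>0$ forces $\omega(\gamma_p^+)=\infty$ for every $p$, i.e.\ an \emph{attractor} at $\infty$, while $\mu<0$ forces $\alpha(\gamma_p^-)=\infty$, i.e.\ a \emph{repellor} at $\infty$. Thus $\mathcal{I}(Z_\mu)>0$ should correspond to an attractor and $\mathcal{I}(Z_\mu)<0$ to a repellor. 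Granting this correspondence, hypothesis (3) finishes the proof: since $\mu\cdot\mathcal{I}(Z_\mu)>0$ for $\mu\neq 0$, we get $\mathcal{I}(Z_\mu)>0$ (hence an attractor) for $\mu>0$ and $\mathcal{I}(Z_\mu)<0$ (hence a repellor) for $\mu<0$. This is precisely the first alternative in the first bullet of \mbox{Definition \ref{def:hopf}}, so both conditions hold and $\mu=0$ is a Hopf bifurcation at $\infty$.

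The genuine difficulty, and the step I would flag as the main obstacle, is the index-stability correspondence itself, which is the analytic and topological core imported from \cite{MR2287882}; it is here that hypotheses (1), (2) and (4) are consumed. Hypotheses (1) and (2) let one realize $\mathcal{I}(Z_\mu)$ as an honest (possibly infinite) limit of flux integrals: by the divergence theorem $\int_{D(C_r)\setminus D_\sigma}\mathrm{Trace}(D\widehat{Z}_\mu)=\oint_{C_r}\widehat{Z}_\mu\cdot n\,ds-\oint_{\partial\overline{D}_\sigma}\widehat{Z}_\mu\cdot n\,ds$, so that the sign of the divergence integral records whether the flow crosses large circles, on balance, outward or inward; and the radial escape condition $\int_\sigma^{+\infty}\Upsilon_\mu(r)\,dr=+\infty$ guarantees that trajectories genuinely reach infinity rather than stalling on a bounded annulus, so the limit sets at $\infty$ in \mbox{Definition \ref{def:atractor repellor}} are actually attained. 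Finally, the absence of zeros together with the Poincar\'e-index bound $\le 1$ in (4) pins the topological type: it makes $\infty$ an isolated singularity of $\widehat{Z}_\mu$ of index at most $1$, thereby excluding the mixed, saddle-like configurations and forcing the sharp dichotomy \lq $\mathcal I>0\Rightarrow$ attractor, $\mathcal I<0\Rightarrow$ repellor\rq. Proving this exclusion-of-center step — not the bookkeeping of signs — is the hard part, and for it I would lean directly on the construction of closed curves transversal to $Z_\mu$ and tending to infinity developed in \cite{MR2287882}.
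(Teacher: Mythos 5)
Note first that the paper never proves Theorem \ref{teoAGG07}: it is quoted from \cite{MR2257430}, and the closest internal analogue is the paper's proof of Theorem \ref{main 1} via Proposition \ref{prop:main global}. Your proposal is correct and follows essentially that same route: hypothesis (4) disposes of the no-singularity clause of Definition \ref{def:hopf}, and the reversal of stability comes from converting the sign of $\mathcal{I}(Z_\mu)$ into the dichotomy \lq\lq $\mathcal{I}>0\Rightarrow$ attractor, $\mathcal{I}<0\Rightarrow$ repellor\rq\rq\ (your calibration agrees with item (a.10) in the proof of Proposition \ref{prop:main global}) via the transversal-curve and flux machinery of \cite{MR1339178,MR2287882,MR2257430}; since that index-to-stability dichotomy is a prior result of those references rather than the statement being proved, delegating it --- with the roles you correctly assign to hypotheses (1), (2) and (4) inside it --- is legitimate.
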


\mbox{Theorem \ref{teoAGG07}} is the one we extend to the differentiable case in \mbox{Theorem \ref{main 1}}.

\section{Hopf bifurcation at infinity for dissipative vector fields}\label{sec:3}%

A differentiable planar vector field $X=f\frac{\partial}{\partial x}+g\frac{\partial}{\partial y}$ is called \emph{dissipative} on a region $D$ if $div(X)=\frac{\partial f}{\partial x}+\frac{\partial g}{\partial y}< 0$ on $D$ and the equality only holds on a set with Lebesgue measure zero. See \cite{MR1094380} for more details.

Let $\mathcal{H}(2,\sigma)$ be the set of  differentiable vector fields $X:\mathbb{R}^2\setminus{\overline{D}_\sigma}\to\mathbb{R}^2$ which are dissipative and have strictly positive Jacobian determinant. This means that all vector field $X$ in $\mathcal{H}(2,\sigma)$ verifies

\begin{equation}\label{eq:spectrum dissipative}
\mbox{\rm  Spc}(X)\subset\Big\{z\in\mathbb{C}:\Re(z)\leq0\Big\}\setminus\big\{(0,0)\big\},
\end{equation} where $Spc(X)\subset \mathbb{C}$ is the set of eigenvalues of $DX(z)$, for all $z\in \mathbb{R}^2\setminus{\overline{D}_\sigma}$.
\par
In this section we study  the case of dissipative vector fields in $\mathcal{H}(2,\sigma)$. This is complemented in the next section, where we focus on vector fields that are free of real eigenvalues. Observe that the divergence of every vector field  verifying \eqref{eq:spectrum dissipative} is Lebesgue almost--integrable on $\mathbb{R}^2\setminus{\overline{D}_\sigma}$, in the sense of \cite{MR2287882}.
\par
Recall that, a map $\widetilde{X}:\mathbb{R}^2\to \mathbb{R}^2$ is a local homeomorphism (respectively a local diffeomorphism) if for every $z\in\mathbb{R}^2$ there exist open neighborhoods $U\subset \mathbb{R}^2$ of $z$ y $V\subset \mathbb{R}^2$ of $\widetilde{X}(z)$ such that the mapping
$
U\ni p\mapsto\widetilde{X}(p)\in V
$ 
is a continuous (resp. differentiable) bijection whose inverse is also continuous (resp. differentiable).


\subsection{On extensions of  dissipative vector fields}\label{sec:extensions}%
Consider the family of vector fields $$\{X_{\mu}(z)= X(z)+ \mu z \; : \; \mu \in \Real \},$$ where $X\in \mathcal{H}(2,\sigma)$ and $X$ has some singularity in $\mathbb{R}^2\setminus{\overline{D}_\sigma}$. In this section, we also prove the existence of  globally injective extensions of $X_{\mu}: \Real^2\setminus{\overline{D}_{\sigma}}\to \Real^2$  to the whole plane whose divergence is Lebesgue almost--integrable in $\mathbb{R}^2$.
\par
Consider $X=(f,g) \in  \mathcal{H}(2,\sigma)$. Here, $\mathcal{F}(h)$ with $h\in\{f,g,\tilde{f},\tilde{g}\}$ denotes the continuous foliation given by the level sets $\{h=\mbox{constant}\}$ and then the leaves of the foliations are differentiable.
The concept of half-Reeb component is a natural generalization of the description of the planar foliations given in \cite{MR0089412}.
More precisely, $\mathcal{A}$  is a \textit{half--Reeb component} of $\mathcal{F}(h)$ if there is a homeomorphism
\[
H: B=\Big\{(x,y)\in [0,2]\times[0,2]:0<x+y\leq 2 \Big\}\to\mathcal{A},
\]
which is a topological equivalence between $\mathcal{F}(h) \vert_{\mathcal{A}}$ and ${\mathcal{F}}(h_0) \vert_B$ with $h_0(x,y)=xy$ such that:
\begin{itemize}
    \item The segment $\{(x,y)\in B : x+y=2\}$ is sent by $H$ onto a transversal section for the foliation $\mathcal{F}(h)$ in the complement of the point $H(1,1)$.
    \item Both segments $\{(x,y)\in B : x=0 \}$ and $\{(x,y)\in B : y=0\}$ are sent by $H$ onto full half--leaves of $\mathcal{F}(h)$.
\end{itemize}

\begin{lem}\label{lem: existence of the index} If $X\in\mathcal{H}(2,\sigma)$ has some singularity,  the following hold:
\begin{itemize}
  \item[(a)] $\exists s_0\geq\sigma$ and a globally injective local homeomorphism $\widetilde{X}:\mathbb{R}^2\to\mathbb{R}^2$ such that $\widetilde{X}(0)=0$ and $\widetilde{X}$ and $X$ coincide on $\mathbb{R}^2\setminus{\overline{D}_{s_0}}$;

  \item[(b)] the index $\mathcal{I}(X)$ at infinity is a well defined number in $[-\infty,+\infty)$.
\end{itemize}

\end{lem}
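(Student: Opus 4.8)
The plan is to establish (a) by the injectivity machinery attached to the coordinate foliations $\mathcal{F}(f)$ and $\mathcal{F}(g)$, and then to read off (b) from dissipativity. First I would localize the singularities. For $X\in\mathcal{H}(2,\sigma)$ the two eigenvalues of $DX(z)$ satisfy $\lambda_1\lambda_2=\mbox{det}(DX(z))>0$ and $\lambda_1+\lambda_2=div(X)(z)\leq 0$, so neither of them is a nonnegative real; that is, $\mbox{Spc}(X)\cap[0,+\infty)=\emptyset$, which is just a reformulation of \eqref{eq:spectrum dissipative}. This is exactly the spectral hypothesis under which the results of \cite{MR2096702,MR2287882} provide an $s_0\geq\sigma$ such that the restrictions of $\mathcal{F}(f)$ and $\mathcal{F}(g)$ to $\mathbb{R}^2\setminus\overline{D}_{s_0}$ possess no half--Reeb component. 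Since for a positively oriented local diffeomorphism a half--Reeb component is the sole local obstruction to injectivity, $X$ is then injective on $\mathbb{R}^2\setminus\overline{D}_{s_0}$ and in particular has no singular point there; hence every singularity of $X$ --- among them the one assumed in the statement --- lies inside $\overline{D}_{s_0}$.

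Next I would extend $X$ across the disc. On $\partial\overline{D}_{s_0}$ the field is nonvanishing, and the spectral restriction makes its boundary winding compatible with a single interior zero of Poincar\'e index $1$ (cf. condition (4) of \mbox{Theorem \ref{teoAGG07}}). I would therefore define a differentiable $\widetilde{X}:\mathbb{R}^2\to\mathbb{R}^2$ that agrees with $X$ on $\mathbb{R}^2\setminus\overline{D}_{s_0}$, keeps $\mbox{det}(D\widetilde{X})>0$, vanishes only at the origin, and introduces no half--Reeb component in $\mathcal{F}(\tilde{f})$ or $\mathcal{F}(\tilde{g})$. Applying the global injectivity criterion of \cite{MR2096702,MR2287882} --- a differentiable local homeomorphism of the plane whose coordinate foliations carry no half--Reeb component is globally injective --- to this $\widetilde{X}$ produces the globally injective local homeomorphism with $\widetilde{X}(0)=0$ demanded in (a).

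Finally, the field $\widetilde{X}$ built in (a) is admissible in the definition of $\mathcal{I}(X)$: it coincides with $X$ off the compact disc, and $z\mapsto\mbox{\rm Trace}(D\widetilde{X}(z))$ is Lebesgue almost--integrable, being continuous on $\overline{D}_{s_0}$ and equal to $div(X)\leq 0$ outside it. Writing
\[
\mathcal{I}(X)=\int_{\overline{D}_{s_0}}\mbox{\rm Trace}(D\widetilde{X})\,dx\wedge dy+\int_{\mathbb{R}^2\setminus\overline{D}_{s_0}}div(X)\,dx\wedge dy,
\]
the first term is finite while the second is non--positive (possibly $-\infty$) by dissipativity, so the sum is a well--defined number in $[-\infty,+\infty)$; its independence of the pair $(\widetilde{X},s_0)$ is \cite[\mbox{Lemma 12}]{MR2287882}. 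The main obstacle is the extension step of (a): producing over $\overline{D}_{s_0}$ a field that is simultaneously a local diffeomorphism, has exactly one interior zero at the origin, and creates no half--Reeb component --- the control of this last property is the delicate point, since it is precisely what the global injectivity criterion requires.
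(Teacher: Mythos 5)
Your argument has a genuine gap, and you in fact name it yourself in your last sentence: everything hinges on producing a \emph{differentiable} extension over $\overline{D}_{s_0}$ that is an orientation--preserving local diffeomorphism, vanishes exactly at the origin, and whose coordinate foliations carry no half--Reeb component --- and no step of your proposal actually constructs such a field. This is not a routine patching problem: the boundary data $X|_{\partial\overline{D}_{s_0}}$ is only Fr\'echet differentiable (not $C^1$), the Poincar\'e--index heuristic you invoke does not control foliations, and absence of half--Reeb components is a global property of $\mathcal{F}(\tilde f),\mathcal{F}(\tilde g)$ that cannot be arranged by a local construction near the boundary. Two preliminary claims are also off: the spectral hypothesis yields (via \cite{MR3062761}, Proposition 2) that every half--Reeb component of $\mathcal{F}(f)$ or $\mathcal{F}(g)$ is \emph{bounded}, not that none exists outside a disc; and injectivity of a map on the complement of a disc does not follow from the slogan that half--Reeb components are the sole obstruction --- the injectivity--at--infinity results of \cite{MR2266382} are exactly what is needed there, and they require more structure. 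Tellingly, the paper itself later treats the existence of a differentiable (``strong'') extension as an additional \emph{hypothesis} (Remark \ref{rem:main}, assumption (1) of Theorem \ref{main 1}), which underscores that what you propose to ``define'' is not routine.

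The paper circumvents precisely this difficulty, in two ways. For (a) it never builds a differentiable extension: using Proposition 5.1 of \cite{MR2266382} and the proof of Theorem 3 of \cite{MR3062761} it obtains a closed curve $C$ (surrounding the singularity and the origin) with an exterior collar on which $X$ is a homeomorphism and such that $\mathcal{F}(f)$ restricted to $\mathbb{R}^2\setminus D(C)$ is trivial; then it extends $X|_{C}$ across $\overline{D}(C)$ by the \emph{Schoenflies theorem} --- a purely topological extension $X_1$ with $X_1(0)=0$. The glued map $\widetilde{X}$ is only a local homeomorphism, which is all that (a) demands, and its global injectivity follows from triviality of $\mathcal{F}(\tilde f)$ together with \cite[Proposition 1.4]{MR2096702}. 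For (b) the paper does \emph{not} feed $\widetilde{X}$ into the definition of $\mathcal{I}(X)$ --- it cannot, since $\widetilde{X}$ need not be differentiable on $\overline{D}(C)$ --- but instead invokes \cite[Theorem 11]{MR2287882}, which supplies a genuinely differentiable global extension $\widehat{X}$, agreeing with $X$ outside a possibly larger disc, whose divergence is Lebesgue almost--integrable. Note that even if your differentiable extension existed, your claim that its trace is ``continuous on $\overline{D}_{s_0}$'' would be unjustified: for merely Fr\'echet differentiable fields the trace is only a pointwise derivative quantity and need not be continuous or even bounded on the disc, which is exactly why this step is delegated to \cite{MR2287882}. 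Your final bookkeeping --- a finite contribution from the disc plus a non--positive (possibly $-\infty$) contribution from dissipativity, giving a value in $[-\infty,+\infty)$ independent of the choices by \cite[Lemma 12]{MR2287882} --- is fine once such an admissible extension is in hand.
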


\begin{proof}
Observe that every $X=(f,g)\in\mathcal{H}(2,\sigma)$ satisfies $\mbox{\rm  Spc}(X)\cap[0,+\infty)=\emptyset$ and $\mbox{\rm  Spc}(X)\subset\{z\in\mathbb{C}:\Re(z)\leq0\}$. Thus, the methods of \cite[Proposition 2]{MR3062761} lead us to obtain that:
\begin{itemize}
\item[(a.1)] Any half--Reeb component of either $\mathcal{F}(f)$ or $\mathcal{F}(g)$ is  bounded.
\end{itemize}

Under conditions (a.1) and $\mbox{\rm  Spc}(X)\cap[0,+\infty)=\emptyset$, the means developed in the last section of \cite{MR2266382} (see Proposition 5.1) can be applied. Consequently, the proof of \cite[Theorem 3]{MR3062761} gives the existence of a closed curve $C$, surrounding the singularity of $X$ and the point at the origin, so that $C$ is  embedded in the plane, and it admits an exterior collar neighborhood $U\subset\mathbb{R}^2\setminus D(C)$ such that:

\begin{itemize}
    \item [(a.2)] $X(C)$ is a non--trivial closed curve surrounding the origin, $X(U)$ is an exterior collar
    neighborhood of $X(C)$ and the restriction $X|_{U}:U\to X(U)$ is a homeomorphism.
    \item[(a.3)] The foliation $\mathcal{F}(f)$, restricted to $\mathbb{R}^2\setminus D(C)$ is topologically equivalent to the foliation made up by all the vertical straight lines on $\mathbb{R}^2\setminus D_1$.
\end{itemize}

By Schoenflies Theorem, \cite{MR1535106,MR728227,MR2190924}, the map $X|_{C}:C\to X(C)$ can be extended to a homeomorphism ${X_1}:{\overline{D}(C)}\to{\overline{D}(X(C))}$ with ${X_1}(0)=0$.
We also extend $X|_{\mathbb{R}^2\setminus D(C)}:\mathbb{R}^2\setminus D(C)\to \mathbb{R}^2$ to $\widetilde{X}=(\tilde{f},\tilde{g}):\mathbb{R}^2 \to \mathbb{R}^2$ by defining $\widetilde{X}{|_{\overline{D}(C)}}=X_1.$
Thus, $\widetilde{X}|_{U}:U\to X(U)$ is a homeomorphism, and $U$ (resp. $X(U)$) is a exterior collar neighborhood of $C$ (resp. $X(C)$).
\par
Furthermore, (a.3) implies that $\widetilde{X}$ is a local homeomorphism whose foliation $\mathcal{F}(\tilde{f})$ is trivial.
Therefore, $\tilde{X}$ is injective by \cite[Proposition 1.4]{MR2096702}. Observe that $\tilde{X}(0)=0$ by construction. Hence $(a)$ holds considering $s_0\geq\sigma$ with $D(C)\subset\overline{D}_{s_0}$.
\par
Under these conditions, \cite[\mbox{Theorem 11}]{MR2287882} gives the existence of some $r>s_0$ such that the restriction  $X|_{\mathbb{R}^2\setminus \overline{D}_{r}}:{{\mathbb{R}^2\setminus \overline{D}_{r}}}\to \mathbb{R}^2$ admits a global differentiable extension $\widehat{X}$ which divergence is Lebesgue almost--integrable on $\mathbb{R}^2$ and $\widehat{X}(0)=0$. Consequently, the index of $X$ at infinity  $\mathcal{I}(X)$ is a well defined number of the extended real line $\in[-\infty,+\infty)$. This concludes the proof.
\end{proof}

\begin{cor}\label{cor:3.2} If $X\in\mathcal{H}(2,\sigma)$ has some singularity and $\mu\leq 0$,  then $\exists s_{\mu}>\sigma$ such that:
\begin{itemize}
  \item[(a)] the restriction  $X|_{{\mathbb{R}^2\setminus \overline{D}_{s_{\mu}}}}:{{\mathbb{R}^2\setminus \overline{D}_{s_{\mu}}}}\to \mathbb{R}^2$ satisfies the statement of \mbox{Lemma \ref{lem: existence of the index}};

\item[(b)] the vector field $X_{\mu}$ has a well--defined index at infinity and
\[\mu<0\Rightarrow\mathcal{I}(X_{\mu})<0.\]
\end{itemize}
\end{cor}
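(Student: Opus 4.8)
The plan is to prove both parts of \mbox{Corollary \ref{cor:3.2}} by reducing the perturbed vector field $X_\mu$ to the setting already handled in \mbox{Lemma \ref{lem: existence of the index}}. The essential observation is that adding $\mu z$ with $\mu\leq 0$ shifts the spectrum in a favorable direction: since $DX_\mu(z)=DX(z)+\mu I$, each eigenvalue $\lambda\in\mbox{\rm Spc}(X)$ is translated to $\lambda+\mu$, so $\Re(\lambda+\mu)=\Re(\lambda)+\mu\leq 0$ whenever $\Re(\lambda)\leq 0$ and $\mu\leq 0$. Thus $X_\mu$ inherits the spectral condition \eqref{eq:spectrum dissipative} from $X$, at least for the real-part inequality. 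I would first verify that $X_\mu\in\mathcal{H}(2,\sigma')$ for a suitable $\sigma'$, checking both dissipativity and the strict positivity of the Jacobian determinant; the latter requires confirming that the shifted eigenvalues avoid the origin, which is where the hypothesis $\mu\leq 0$ combined with $\mbox{\rm Spc}(X)\cap[0,+\infty)=\emptyset$ must be used carefully.

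For part (a), once $X_\mu$ is shown to lie in the appropriate class $\mathcal{H}(2,\cdot)$ and to possess a singularity, I would apply \mbox{Lemma \ref{lem: existence of the index}} directly to $X_\mu$ to extract the radius $s_\mu$ and the globally injective local homeomorphism extension. The point requiring attention is the persistence of a singularity: $X$ has a singularity by assumption, but $X_\mu(z)=X(z)+\mu z=0$ is a different equation, so I must argue that for $\mu\leq 0$ the perturbed field still vanishes somewhere in the relevant region. This should follow from the injectivity established in \mbox{Lemma \ref{lem: existence of the index}}(a) together with a degree or index argument on the extension $\widetilde{X}_\mu$: since $\widetilde{X}_\mu(0)=0$ and the map is a globally injective local homeomorphism, the singularity structure is controlled.

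For part (b), I would compute the index at infinity $\mathcal{I}(X_\mu)=\int_{\mathbb{R}^2}\mbox{\rm Trace}(D\widehat{X_\mu})\,dx\wedge dy$ and track how it changes under the perturbation. Writing $\mbox{\rm Trace}(DX_\mu(z))=\mbox{\rm Trace}(DX(z))+2\mu$, the strict inequality $\mathcal{I}(X_\mu)<0$ for $\mu<0$ should emerge from combining the dissipativity of $X$ (which gives $\mathcal{I}(X)\leq 0$ via \mbox{Lemma \ref{lem: existence of the index}}(b)) with the strictly negative contribution of the $2\mu$ term integrated over the region where the extension agrees with $X_\mu$. The delicate point is that this added term $2\mu$ is integrated over a set of infinite Lebesgue measure, so I cannot simply add $2\mu\cdot(\text{area})$; instead I expect to use the almost--integrability framework and the fact that $\mathcal{I}(X)\in[-\infty,0)$ already, so that shifting the trace pointwise downward by the positive amount $-2\mu>0$ can only decrease the (possibly infinite) integral, yielding $\mathcal{I}(X_\mu)<0$.

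The main obstacle I anticipate is making the index comparison in part (b) rigorous when $\mathcal{I}(X)$ may equal $-\infty$ or when the integrals involved are improper: one must ensure the extension $\widehat{X_\mu}$ can be chosen compatibly with $\widehat{X}$ so that the difference of traces is exactly $2\mu$ on the unbounded region, and that the almost--integrability in the sense of \cite{MR2287882} behaves additively under such a pointwise shift. Establishing that the strict inequality (rather than merely $\leq 0$) survives the limiting process for $\mu<0$ is the crux, and I would handle it by exhibiting an annular region on which the trace is bounded and on which the $2\mu$ perturbation produces a definite negative contribution that the almost--integrable tail cannot cancel.
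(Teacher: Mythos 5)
Your overall route coincides with the paper's: the translation formula \eqref{eq: translation spectrum}, the resulting inclusion \eqref{eq:strong dissipative}, reduction of item (a) to Lemma~\ref{lem: existence of the index}, and negativity of the trace for item (b). However, there is a genuine gap in your treatment of (a). You propose to apply Lemma~\ref{lem: existence of the index} to $X_{\mu}$ \emph{as a black box}, which forces you to verify that $X_{\mu}$ has a singularity, and your argument for this is both circular and aimed at a statement that is false in general. Circular, because the global injectivity and the extension $\widetilde{X}_{\mu}$ you invoke to ``control the singularity structure'' are precisely the conclusions of the lemma whose hypotheses you are trying to check. False, because adding $\mu z$ typically destroys the singularity: if $X(p)=0$ with $p\in\mathbb{R}^2\setminus\overline{D}_{\sigma}$, then $X_{\mu}(p)=\mu p\neq 0$ for $\mu<0$, and nothing forces $X_{\mu}$ to vanish anywhere in its domain --- indeed the whole framework (Definition~\ref{def:hopf}, Proposition~\ref{prop:without sing}) relies on $X_{\mu}$ being singularity--free near infinity. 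The paper sidesteps this entirely: it invokes the \emph{proof} of Lemma~\ref{lem: existence of the index}, not its statement. That proof uses only the spectral conditions $\mbox{\rm Spc}\subset\{z\in\mathbb{C}:\Re(z)\leq 0\}$ and $\mbox{\rm Spc}\cap[0,+\infty)=\emptyset$ (they yield bounded half--Reeb components, the closed curve $C$, the Schoenflies extension and injectivity); the existence of a singularity plays no role in that construction, so once \eqref{eq:strong dissipative} is established the conclusions hold for $X_{\mu}$ verbatim. This ``holds verbatim'' step is made explicit in the proofs of Corollary~\ref{cor:3.2BIS} and Proposition~\ref{cor:3.3}.

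For item (b), your core idea --- a pointwise downward shift of the trace over a region of infinite measure forces a negative index --- is the paper's argument, but your scaffolding is off. Lemma~\ref{lem: existence of the index}(b) gives $\mathcal{I}(X)\in[-\infty,+\infty)$, \emph{not} $\mathcal{I}(X)\leq 0$, so you cannot start from ``$\mathcal{I}(X)\in[-\infty,0)$ already''; and no comparison between $\mathcal{I}(X_{\mu})$ and $\mathcal{I}(X)$ is needed, which also disposes of your worries about additivity of almost--integrable functions and about choosing the extensions compatibly. Since every eigenvalue of $DX_{\mu}(z)$ has real part at most $\mu$, one has $\mbox{\rm Trace}(DX_{\mu}(z))\leq 2\mu<0$ pointwise on $\mathbb{R}^2\setminus\overline{D}_{s_{\mu}}$; hence the integral of the trace over that unbounded region is $-\infty$, and since the index is well defined in $[-\infty,+\infty)$ and independent of the chosen extension, $\mathcal{I}(X_{\mu})=-\infty<0$. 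This one--line argument is exactly the paper's proof of (b); your proposed ``annular region'' refinement is unnecessary once one notes there are no positive contributions to cancel outside the disc.
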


\begin{proof} Observe that a direct computation of the eigenvalues in $X_{\mu}(z)=X(z)+\mu z$ shows that
\begin{equation}\label{eq: translation spectrum}
\mbox{Spc}(X_{\mu})=\mu+\mbox{Spc}(X).
\end{equation}
Thus, as a consequence of \eqref{eq: translation spectrum}, we can obtain that:
\begin{equation}\label{eq:strong dissipative}
\mbox{Spc}(X_{\mu})\subset\{z\in\mathbb{C}:\Re(z)\leq\mu\}.
\end{equation}
Therefore,  for every $\mu\leq0$,  (a) follows  by \mbox{Lemma \ref{lem: existence of the index}}.

Item (b) holds by \eqref{eq:strong dissipative}  and definition of index at infinity of $X_{\mu}$, provided by $$\mbox{Trace}(DX_{\mu})\leq\mu<0.$$
\end{proof}

\begin{cor}\label{cor:3.2BIS} Let $X\in\mathcal{H}(2,\sigma)$ with some singularity. Suppose in addition that
\begin{equation} \nonumber 
\mbox{det}(DX_{\mu})>0,\mbox{ for all $\mu$ in some open interval } (0,\epsilon_0).
\end{equation}
Then, $\exists \varepsilon >0$  such that for every $\mu\in [-\varepsilon, \varepsilon]$ there exists $s_{\mu}>\sigma$ for which the restriction $X|_{{\mathbb{R}^2\setminus \overline{D}_{s_{\mu}}}}:{{\mathbb{R}^2\setminus \overline{D}_{s_{\mu}}}}\to \mathbb{R}^2$ satisfies the statement of Lemma \ref{lem: existence of the index}. 
\end{cor}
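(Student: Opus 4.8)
The plan is to prove that, for every $\mu\in[-\varepsilon,\varepsilon]$, the field $X_{\mu}$ itself satisfies Lemma~\ref{lem: existence of the index}; the restriction in the statement then inherits conclusions (a) and (b) on any complement $\mathbb{R}^2\setminus\overline{D}_{s_{\mu}}$ that contains the disc produced by that lemma. To apply Lemma~\ref{lem: existence of the index} to $X_{\mu}$ I need two inputs: a singularity of $X_{\mu}$, and the Guti\'errez condition $\mbox{Spc}(X_{\mu})\cap[0,+\infty)=\emptyset$. The first is automatic for small $|\mu|$: as $\det(DX(p_0))>0$ at the singularity $p_0$ of $X$, the implicit function theorem applied to $X(z)+\mu z=0$ produces a zero of $X_{\mu}$ near $p_0$. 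The second is the real content. For $\mu\leq0$ it is already covered by Corollary~\ref{cor:3.2}, since then $\mbox{Spc}(X_{\mu})\subset\{z\in\mathbb{C}:\Re(z)\leq\mu\leq0\}$; so I would fix any $\varepsilon\in(0,\epsilon_0)$ and concentrate on $0<\mu\leq\varepsilon$, where the hypothesis $\det(DX_{\mu})>0$ is at hand.

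For the spectral condition I would read the translation \eqref{eq: translation spectrum}, $\mbox{Spc}(X_{\mu})=\mu+\mbox{Spc}(X)$, pointwise in $z$. Where $DX(z)$ has complex conjugate eigenvalues, the translates by $\mu$ stay off the real axis, hence off $[0,+\infty)$, for every real $\mu$. The only delicate points are those where $DX(z)$ has real eigenvalues $\lambda_1(z)\leq\lambda_2(z)$; there \eqref{eq:spectrum dissipative} and $\det(DX(z))>0$ force $\lambda_1,\lambda_2<0$. Reading $\det(DX_{\mu}(z))=(\mu+\lambda_1)(\mu+\lambda_2)>0$ for all $\mu\in(0,\epsilon_0)$ as positivity of an upward parabola in $\mu$, whose roots $-\lambda_2\leq-\lambda_1$ bound the interval on which it is negative, I conclude that $(0,\epsilon_0)$ cannot meet $[-\lambda_2,-\lambda_1]$; since $0<-\lambda_2$, this forces $\epsilon_0\leq-\lambda_2$, that is $\lambda_2(z)\leq-\epsilon_0$. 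Thus every real eigenvalue of $DX(z)$ lies below $-\epsilon_0$, uniformly in $z$. Consequently, for $0\leq\mu\leq\varepsilon$ the real eigenvalues of $DX_{\mu}$ satisfy $\mu+\lambda_i\leq\varepsilon-\epsilon_0<0$ while the complex ones remain non-real, so that $\mbox{Spc}(X_{\mu})\cap[0,+\infty)=\emptyset$; in particular $\det(DX_{\mu})>0$, so $X_{\mu}$ is a local diffeomorphism and its two coordinate foliations are well defined.

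Granting that the half--Reeb components of $X_{\mu}$ are bounded (the analogue of step (a.1), discussed next), the proof of Lemma~\ref{lem: existence of the index} would transfer to $X_{\mu}$ with only notational changes: the Schoenflies step yields the embedded curve $C$ with its exterior collar, \cite[Proposition 1.4]{MR2096702} produces a globally injective local homeomorphism $\widetilde{X_{\mu}}$ with $\widetilde{X_{\mu}}(0)=0$ (so $X_{\mu}$ has no zero outside $\overline{D}(C)$), and \cite[Theorem 11]{MR2287882} supplies a global extension with Lebesgue almost--integrable divergence, making the index at infinity well defined. Any $s_{\mu}>\sigma$ with $D(C)\subset\overline{D}_{s_{\mu}}$ then makes $X_{\mu}|_{\mathbb{R}^2\setminus\overline{D}_{s_{\mu}}}$ satisfy the statement of Lemma~\ref{lem: existence of the index}, finishing $0<\mu\leq\varepsilon$; the range $-\varepsilon\leq\mu\leq0$ is Corollary~\ref{cor:3.2}.

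The step I expect to be the genuine obstacle is (a.1) for $\mu>0$. In Lemma~\ref{lem: existence of the index} the boundedness of the half--Reeb components was read off from $\mbox{Spc}(X)\subset\{z\in\mathbb{C}:\Re(z)\leq0\}$, and this inclusion now fails for $X_{\mu}$: a complex eigenvalue of $DX(z)$ with real part close to $0$, allowed by dissipativity alone, gains positive real part after translation by $\mu$. The crux is therefore to confirm that the boundedness provided by \cite[Proposition 2]{MR3062761}, and hence the entire injectivity scheme of \cite{MR3062761,MR2266382,MR2096702}, rests only on $\mbox{Spc}\cap[0,+\infty)=\emptyset$ and not on the stronger half--plane inclusion. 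This is exactly what the determinant hypothesis on $(0,\epsilon_0)$ secures: it pins the real spectrum uniformly below $-\epsilon_0$ while leaving the complex spectrum free to cross the imaginary axis, and uniform negativity of the real eigenvalues is all that (a.1) actually needs.
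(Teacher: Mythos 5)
Your proposal is correct and follows essentially the same route as the paper: the paper's own proof likewise deduces from the determinant hypothesis and \eqref{eq: translation spectrum} that $\mbox{Spc}(X_{\mu})\cap\big(-\frac{\epsilon_0}{4},+\infty\big)=\emptyset$ for $\mu\in\big[-\frac{\epsilon_0}{4},\frac{\epsilon_0}{4}\big]$ (your parabola argument is exactly the justification the paper omits), then invokes Proposition 2.7 of \cite{MR2266382} --- whose hypothesis is precisely the real-interval spectral condition you isolate, not the half-plane inclusion, so the ``crux'' you flag is settled by that citation --- to get bounded half--Reeb components, and concludes that the proof of Lemma \ref{lem: existence of the index} holds verbatim. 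The only superfluous element in your write-up is the implicit function theorem step producing a singularity of $X_{\mu}$: the paper never needs it, and in the merely differentiable (non-$C^1$) setting the classical IFT is not available anyway, so that step, if one insisted on it, should be replaced by a Brouwer degree argument at $p_0$ using $\det DX(p_0)>0$.
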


\begin{proof}

By hypothesis, $\mbox{det}(DX_{\mu})>0$ for all $\mu\in(0,\epsilon_0)$,  then \eqref{eq: translation spectrum}  implies that
\[
   \mbox{Spc}(X_{\mu}) \cap\Big(-\frac{\epsilon_0}{4},+\infty\Big)=\emptyset, \quad\mbox{when} \quad \mu\in\Big[-\frac{\epsilon_0}{4},\frac{\epsilon_0}{4}\Big].
\]

Consider $\varepsilon=\frac{\epsilon_0}{4}>0$. Thus, Proposition 2.7 in \cite{MR2266382} implies that, for all $\mu\in[-\varepsilon,\varepsilon]$, $X_{\mu}=(f_{\mu},g_{\mu})$ has the following property: \lq\lq Any half--Reeb component of either $\mathcal{F}(f_{\mu})$ or $\mathcal{F}(g_{\mu})$ is  bounded\rq\rq.
Therefore, the proof of Lemma \ref{lem: existence of the index} holds verbatim.
\end{proof}

\begin{rem}\label{rem:main} Under conditions of Corollary \ref{cor:3.2BIS}, the set $\big\{s_{\mu}\geq\sigma:\mu\in[-\epsilon,\epsilon]\big\}$  is a well defined set. In addition, this set is bounded when $\mu\mapsto s_{\mu}$ is continuous. 
\end{rem}

When the extension given by Lemma \ref{lem: existence of the index}, Corollary \ref{cor:3.2} and Corollary \ref{cor:3.2BIS} is also a local diffeomorphism, we say that the vector field $\widetilde{X}_{\mu}:\Real^2 \to \Real^2$ is a \textit{strong extension} of the vector field $X_{\mu}:\Real^2\setminus \overline{D}_{s_\mu}\to \Real^2$.


\subsection{Global vector fields}\label{sec: global case} In this section we study the stability at infinity of the family of global  vector fields $$\displaystyle{\{Y_{\mu}(z)= Y(z)+ \mu z  \; : \;\mu\in \Real\}},$$ where $Y: \Real^2\to \Real^2$ is a  vector field  such that $Y(0)=0$ and the restriction $Y|_{\mathbb{R}^2\setminus{\overline{D}_{\sigma}}}:\mathbb{R}^2\setminus{\overline{D}_{\sigma}}\to\mathbb{R}^2$ belongs to $\mathcal{H}(2,\sigma)$.

\begin{prop}\label{prop:without sing}%
 If there exists $\varepsilon_0>0$ such that the map $Y_{\mu}(z)=Y(z)+\mu z$ is a local diffeomorphism, for all $\mu\in(-\varepsilon_0,\varepsilon_0)$. Then, $\exists \varepsilon>0$ such that the restriction $Y_{\mu}|_{\mathbb{R}^2\setminus{\overline{D}_{\sigma}}}:\mathbb{R}^2\setminus{\overline{D}_{\sigma}}\to\mathbb{R}^2$ has no singularities in $\mathbb{R}^2\setminus{\overline{D}_{\sigma}}$, for every $\mu\in[-\varepsilon,\varepsilon]$.
\end{prop}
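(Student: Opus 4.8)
The plan is to reduce the statement to the \emph{global injectivity} of $Y_\mu$ for $\mu$ near $0$. Indeed, since $Y_\mu(0)=Y(0)+\mu\cdot 0=0$ for every $\mu$, the origin is always a zero of $Y_\mu$; if $Y_\mu$ is injective on $\mathbb{R}^2$, then $0$ is its only zero, and as $0\in\overline{D}_\sigma$ this forces $Y_\mu$ to have no singularity in $\mathbb{R}^2\setminus\overline{D}_\sigma$. Thus it suffices to produce $\varepsilon>0$ such that $Y_\mu$ is injective whenever $|\mu|\le\varepsilon$.

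First I would upgrade the sign of the Jacobian determinant on the outer region. Fixing $z\in\mathbb{R}^2\setminus\overline{D}_\sigma$, the map $\mu\mapsto\det\big(DY_\mu(z)\big)=\det\big(DY(z)+\mu\,\mathrm{Id}\big)$ is a real polynomial which equals $\det(DY(z))>0$ at $\mu=0$ (because $Y|_{\mathbb{R}^2\setminus\overline{D}_\sigma}\in\mathcal{H}(2,\sigma)$) and never vanishes on $(-\varepsilon_0,\varepsilon_0)$ (because each $Y_\mu$ is a local diffeomorphism); hence $\det(DY_\mu(z))>0$ throughout that interval. Arguing exactly as in Corollary \ref{cor:3.2BIS}, the positivity of $\det(DY_\mu)$ on all of $(0,\varepsilon_0)$ together with \eqref{eq: translation spectrum} and \eqref{eq:spectrum dissipative} forces the real eigenvalues of $DY$ on $\mathbb{R}^2\setminus\overline{D}_\sigma$ to avoid $(-\varepsilon_0,0)$; choosing $\varepsilon\in(0,\varepsilon_0)$ small enough I obtain $\delta>0$ with $\mathrm{Spc}(Y_\mu)\cap(-\delta,+\infty)=\emptyset$ on $\mathbb{R}^2\setminus\overline{D}_\sigma$ for all $|\mu|\le\varepsilon$. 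By Proposition 2.7 of \cite{MR2266382} this gives that every half--Reeb component of $\mathcal{F}(f_\mu)$ or $\mathcal{F}(g_\mu)$ is bounded, uniformly for $|\mu|\le\varepsilon$.

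With bounded half--Reeb components and the spectral gap in hand, I would run the construction in the proof of Lemma \ref{lem: existence of the index} (following \cite{MR3062761,MR2266382}), now anchored at the origin rather than at a putative outer singularity: this produces a Jordan curve $C=C_\mu$ surrounding $\overline{D}_\sigma$, with an exterior collar $U$, such that $Y_\mu|_{\mathbb{R}^2\setminus D(C)}$ is injective, $Y_\mu(C)$ is a Jordan curve around $0$ with exterior collar $Y_\mu(U)$, and $\mathcal{F}(f_\mu)$ is trivial on $\mathbb{R}^2\setminus D(C)$. To pass from exterior injectivity to global injectivity I would invoke the standing hypothesis that $Y_\mu$ is a local diffeomorphism on \emph{all} of $\mathbb{R}^2$: the restriction $Y_\mu|_{\overline{D}(C)}$ is then a local homeomorphism of the closed disc which is injective on its boundary $C$, and a plane local homeomorphism of a closed disc that is injective on the boundary is a homeomorphism onto the closed region bounded by the image of the boundary. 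Hence $Y_\mu|_{\overline{D}(C)}$ is injective with image $\overline{D}(Y_\mu(C))$; matching interior with interior and exterior with exterior of the Jordan curve $Y_\mu(C)$ yields that $Y_\mu$ is globally injective. Consequently $0$ is the unique zero of $Y_\mu$, it lies in $\overline{D}_\sigma$, and $Y_\mu$ has no singularity in $\mathbb{R}^2\setminus\overline{D}_\sigma$ for every $|\mu|\le\varepsilon$.

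The main obstacle lies in this last step: carrying out the outer construction uniformly for all small $\mu$ and, above all, gluing the boundary--injective disc map to the exterior so as to obtain genuine global injectivity of $Y_\mu$ itself. The decisive point is the disc statement, where the hypothesis that $Y_\mu$ is a local diffeomorphism on the whole plane (hence on $\overline{D}(C)$, including the annulus $D(C)\setminus\overline{D}_\sigma$ and the interior of $\overline{D}_\sigma$) is indispensable; this is precisely what lets me avoid modifying the field inside and thereby control the annulus that the extension arguments behind Lemma \ref{lem: existence of the index} would otherwise leave untreated. Care is also needed to anchor $C$ at the origin via $Y_\mu(0)=0$, so that the argument never presupposes the absence of outer singularities that we are trying to establish.
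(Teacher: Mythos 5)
Your proposal and the paper's proof share the same skeleton: both observe that $Y_\mu(0)=0$, so it suffices to prove that $Y_\mu$ is globally injective for all small $|\mu|$, and both extract the same exterior spectral gap for $Y_\mu$ from the local--diffeomorphism hypothesis via \eqref{eq: translation spectrum} together with \eqref{eq:spectrum dissipative}. The difference is how injectivity is then obtained. The paper invokes \cite[Theorem 2.1]{MR2266382} in one stroke: a differentiable local diffeomorphism of the plane whose spectrum outside a compact disc avoids an interval around the origin is globally injective. You instead rebuild this conclusion by hand: bounded half--Reeb components via \cite[Proposition 2.7]{MR2266382}, the curve--plus--collar construction from the proof of Lemma \ref{lem: existence of the index} (run around $\overline{D}_\sigma$, with no singularity needed) to get injectivity of $Y_\mu$ on $\mathbb{R}^2\setminus D(C)$, then a degree--theoretic disc lemma (a local homeomorphism on a neighborhood of $\overline{D}(C)$ that is injective on $C$ maps $\overline{D}(C)$ homeomorphically onto $\overline{D}\big(Y_\mu(C)\big)$), and finally a gluing step. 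Your route is correct: the disc lemma follows from the constancy of the local index of a planar local homeomorphism plus the Jordan curve theorem, and the gluing works because no exterior point can hit $Y_\mu(C)$ (injectivity on $\mathbb{R}^2\setminus D(C)$), so by connectedness of $\mathbb{R}^2\setminus\overline{D}(C)$ and the exterior collar property all exterior points land in the unbounded component of $\mathbb{R}^2\setminus Y_\mu(C)$. Your observation that the global local--diffeomorphism hypothesis is what controls the annulus $D(C)\setminus\overline{D}_\sigma$ --- the region that the Schoenflies extension in Lemma \ref{lem: existence of the index} would overwrite --- is exactly the right point, and it is in effect the content of the cited theorem. What your approach buys is self--containedness and transparency about where the hypothesis enters; what it costs is length, since you are essentially reproving the special case of \cite[Theorem 2.1]{MR2266382} that the paper needs. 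One simplification: the uniformity in $\mu$ that you flag as an obstacle is not actually required, because once $\varepsilon$ is fixed by the spectral gap, the whole construction and the injectivity conclusion are carried out for each fixed $\mu\in[-\varepsilon,\varepsilon]$ separately.
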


\begin{proof}

As a consequence of \eqref{eq: translation spectrum}, the following assumptions are equivalent

\begin{itemize}
\item[(i)] $\displaystyle{0\not\in\mbox{Spc}(Y_{\mu}) \quad \text{for all} \; \mu\in[0,\varepsilon_0) \quad (\text{resp. \; for all} \; \mu\in(-\varepsilon_0,0]),}$

\item[(ii)] $\displaystyle{ \mbox{Spc}(Y)\cap(-\varepsilon_0,0]=\emptyset \quad (\text{resp.} \quad \mbox{Spc}(Y)\cap[0,\varepsilon_0)=\emptyset).}$

\end{itemize}

Since the restriction $Y|_{\mathbb{R}^2\setminus{\overline{D}_{\sigma}}}:\mathbb{R}^2\setminus{\overline{D}_{\sigma}}\to\mathbb{R}^2$ belongs to $\mathcal{H}(2,\sigma)$,  condition \eqref{eq: translation spectrum} implies  that

$$\mbox{Spc}(Y_\mu|_{\Real^2\setminus \overline{D}_{\sigma}})\bigcap\left(-\frac{\varepsilon_0}{4},\frac{\varepsilon_0}{4}\right)=\emptyset, \; \forall \mu \in [-\frac{\varepsilon_0}{2},\frac{\varepsilon_0}{2}].$$
Hence,  by \cite[Theorem 2.1]{MR2266382}, the following holds:
\begin{equation}\label{eq:Thm C gut}
Y_{\mu}:\mathbb{R}^2\to\mathbb{R}^2 \quad\mbox{is globally injective, for every }\mu\in\Big[\displaystyle-\frac{\varepsilon_0}{2},\frac{\varepsilon_0}{2}\Big],
\end{equation}
(see also \cite[Theorem 3]{MR2609212} and \cite{MR1045818}).
\par
Consequently, since $Y_{\mu}(0)=0$, \eqref{eq:Thm C gut} implies that there exists $\varepsilon=\frac{\varepsilon_0}{2}>0$ such that the restriction $Y_{\mu}|_{\mathbb{R}^2\setminus{\overline{D}_{\sigma}}}:\mathbb{R}^2\setminus{\overline{D}_{\sigma}}\to\mathbb{R}^2$ has no singularities in $\mathbb{R}^2\setminus{\overline{D}_{\sigma}}$, for every $\mu\in[-\varepsilon,\varepsilon]$. Thus, the proposition holds.
\end{proof}

\begin{prop}\label{prop:main global}%
Let  $\varepsilon >0$ be as in Proposition \ref{prop:without sing}. Suppose in addition that $\exists s>\sigma$ such that, for each $0<\mu\leq\varepsilon$,

 \begin{itemize}
  \item[(1)] $Y_{\mu}$ induces a well--defined negative semi--flow on $\mathbb{R}^2\setminus\overline{D}_s$,
  \item[(2)] the index $\mathcal{I}(Y_\mu)\in[-\infty,+\infty)$ satisfies $\mu\cdot\mathcal{I}(Y_\mu)>0$.
\end{itemize}
Then, for each $\mu\in[-\varepsilon,\varepsilon]$, $\infty$  is an attractor (respectively a repellor) for the vector field $Y_{\mu}|_{\mathbb{R}^2\setminus{\overline{D}_{s}}}:\mathbb{R}^2\setminus{\overline{D}_{s}}\to\mathbb{R}^2$ provided by  $\mathcal{I}(Y_\mu)>0$ (respectively $\mathcal{I}(Y_\mu)<0$).
\end{prop}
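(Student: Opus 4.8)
The plan is to read Proposition \ref{prop:main global} as the single--field stability statement underlying Theorem \ref{teoAGG07}, now in the merely differentiable (not necessarily $C^1$) category, and to obtain it by applying to each field $Y_\mu$ the index--stability criterion of \cite{MR2287882} relating the sign of $\mathcal{I}$ to the attractor/repellor character of $\infty$. Accordingly I would split the range as $[-\varepsilon,\varepsilon]=[-\varepsilon,0]\cup(0,\varepsilon]$, handling the repellor half through the dissipativity of $Y$ and the attractor half through the two added hypotheses.

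First I would assemble the ambient hypotheses of the criterion and check that they hold uniformly in $\mu$. By Proposition \ref{prop:without sing}, the restriction $Y_\mu|_{\mathbb{R}^2\setminus\overline{D}_\sigma}$ has no singularities for every $\mu\in[-\varepsilon,\varepsilon]$, hence neither does its further restriction to $\mathbb{R}^2\setminus\overline{D}_s$. Since $Y|_{\mathbb{R}^2\setminus\overline{D}_\sigma}\in\mathcal{H}(2,\sigma)$, relation \eqref{eq: translation spectrum} yields $\operatorname{Trace}(DY_\mu)=\operatorname{Trace}(DY)+2\mu$, so the divergence of $Y_\mu$ remains Lebesgue almost--integrable and $\mathcal{I}(Y_\mu)\in[-\infty,+\infty)$ is well defined, exactly as in Corollary \ref{cor:3.2}. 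Finally, the proof of Proposition \ref{prop:without sing} gives that $Y_\mu:\mathbb{R}^2\to\mathbb{R}^2$ is globally injective for $\mu\in[-\varepsilon,\varepsilon]$ (see \eqref{eq:Thm C gut}); together with $Y_\mu(0)=0$ this forces the Poincar\'e index of the extension $\hat Y_\mu$ at $\infty$ to be at most $1$, the last of the ambient conditions of Theorem \ref{teoAGG07}.

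I would then run the two cases. For $\mu\in[-\varepsilon,0]$ the field is strongly dissipative: by \eqref{eq:strong dissipative} one has $\operatorname{Spc}(Y_\mu)\subset\{\Re(z)\leq\mu\}\setminus\{0\}\subseteq\{\Re(z)\leq 0\}\setminus\{0\}$, while $\operatorname{Trace}(DY_\mu)\leq 2\mu\leq 0$ is strictly negative off a null set, so $\mathcal{I}(Y_\mu)<0$. This is precisely the spectral regime in which \cite{MR2287882} (with \cite{MR2096702}) both furnishes the well--defined negative semi--flow and identifies $\alpha(\gamma_p^-)=\infty$ for every $p$ outside a large disc; thus $\infty$ is a repellor, matching $\mathcal{I}(Y_\mu)<0$. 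For $\mu\in(0,\varepsilon]$ I would feed the stated assumptions straight into the criterion: condition (1) supplies the well--defined negative semi--flow on $\mathbb{R}^2\setminus\overline{D}_s$ and condition (2) gives $\mathcal{I}(Y_\mu)>0$, whence the criterion returns that each $C_r$ is crossed outward and $\omega(\gamma_p^+)=\infty$, i.e.\ $\infty$ is an attractor. Assembling the two ranges produces the asserted dichotomy over all of $[-\varepsilon,\varepsilon]$.

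The step I expect to be delicate is supplying the two analytic ingredients of the criterion that are not literally among the hypotheses: the transversal family of closed curves tending to infinity demanded by Definition \ref{def:atractor repellor}, and the growth condition $\int_s^{+\infty}\Upsilon_\mu(r)\,dr=+\infty$, with $\Upsilon_\mu(r)=\inf\{\norm{Y_\mu(z)}:\norm{z}=r\}$, which prevents outgoing orbits from accumulating on a bounded limit set. For the repellor range these should follow from the strong dissipativity and from the globally injective extension of Lemma \ref{lem: existence of the index}. For the attractor range the genuinely subtle point is that only a \emph{negative} semi--flow is assumed, so the forward conclusion $\omega(\gamma_p^+)=\infty$ cannot be read off directly; it must instead come from the criterion's own transversal--curve argument, using that the positive index orients the flow outward across each $C_r$ and that the absence of singularities in $\mathbb{R}^2\setminus\overline{D}_s$ excludes bounded forward limit sets.
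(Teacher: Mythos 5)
Your high--level strategy (run the index--to--stability machinery of \cite{MR2287882} field by field, splitting the parameter range at $\mu=0$) is the same as the paper's, but there is a genuine gap precisely at the point you flag as ``delicate'', and the fallback you offer there is incorrect. The paper's proof contains one essential step that your proposal never supplies: proving that the set of \emph{periodic} trajectories of $Y_\mu$ is bounded. This is done by contradiction with a Green's theorem computation: if there were an unbounded nested sequence $\Gamma_1,\Gamma_2,\dots$ of periodic orbits, then for each $n$,
\[
\int_{\overline{D}(\Gamma_n)}\mbox{\rm Trace}(DY_{\mu})\,dx\wedge dy=\oint_{\Gamma_n}\langle Y_{\mu}(s),\eta_n^{e}(s)\rangle\,ds=0,
\]
because $Y_\mu$ is tangent to each $\Gamma_n$; letting $n\to\infty$ would force $\mathcal{I}(Y_\mu)=0$, contradicting $\mathcal{I}(Y_\mu)\neq0$. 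Compactness of $[-\varepsilon,\varepsilon]$ then gives a single $s$ beyond which $Y_\mu$ has no periodic orbits at all, and only after this step do the hypotheses of \cite[Theorem 26]{MR2287882} (existence of closed transversals $C_r$ tending to infinity) and \cite[Theorem 28]{MR2287882} (the sign of $\mathcal{I}(Y_\mu)$ decides attractor versus repellor) become available.

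Your substitute reasoning cannot close this gap. For the attractor range you assert that ``the absence of singularities in $\mathbb{R}^2\setminus\overline{D}_s$ excludes bounded forward limit sets'': this is false, since by Poincar\'e--Bendixson a nonempty compact limit set in a singularity--free region contains a periodic orbit, so periodic orbits are exactly what must be excluded, and nothing in your argument excludes them. The same omission undermines your claim that the transversal curves ``should follow'' from dissipativity and the injective extension: their construction in \cite{MR2287882} is what requires the absence of periodic orbits near infinity in the first place. Two smaller points. First, dissipativity yields uniqueness of \emph{positive} semi--trajectories (a positive semi--flow, item (a.3) of the paper's proof), not the negative semi--flow you attribute to it. Second, you import hypotheses of Theorem \ref{teoAGG07} (the growth condition on $\Upsilon_\mu$ and the Poincar\'e index bound), but that theorem is the $C^1$ statement this proposition is meant to generalize, so it cannot be invoked; the paper instead works directly with \cite[Theorems 26 and 28]{MR2287882}, obtaining the needed lower bound $||Y_\mu(z)||>c$ outside a disc from openness and global injectivity of $Y_\mu$, and never uses the Poincar\'e index.
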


\begin{proof}
Observe that, for all $-\varepsilon\leq \mu<0$

$$\mbox{Spc}(Y_{\mu}|_{\mathbb{R}^2\setminus\overline{D}_{\sigma}})\subset \{z\in \mathbb{C} : \Re(z)\leq \mu<0\}\setminus (-\frac{\varepsilon}{2},0]$$
Hence, for all $s_0 \geq \sigma$ the following hold:

\begin{itemize}
\item[(a.1)] $\mbox{\rm Trace}\big(DY_{\mu}\big)<0$ and so $\;z\mapsto\mbox{\rm Trace}\big(DY_{\mu}(z)\big)$ is Lebesgue almost--integrable in $\mathbb{R}^2\setminus\overline{D}_{s_0}$(\cite[\mbox{Lemma 7}]{MR2287882});
\item[(a.2)] $\mathcal{I}(Y_{\mu})$ is a well-defined number in $[-\infty,+\infty)$ (\cite[\mbox{Corollary 13}]{MR2287882}).
\item[(a.3)] for every $z\in \mathbb{R}^2\setminus\overline{D}_{s_0}$, there is only one
positive semi--trajectory of $Y_{\mu}$ passing through $z$.
(\cite[\mbox{Teorema 4}]{MR2287882}).
\end{itemize}
Therefore,  $\mathcal{I}(Y_{\mu})$ is a well--defined number and $\mathcal{I}(Y_{\mu})\neq 0$,  for all $\mu \in [-\varepsilon, \varepsilon]$ .
\begin{itemize}
  \item[(a.4)] We claim that for every $\mu\in[-\varepsilon,\varepsilon]$, the condition $\mathcal{I}(Y_\mu)\neq0$ implies that $Y_{\mu}$ has a bounded set of periodic trajectories.
\end{itemize}
Since $Y_{\mu}(0)=0$ and $Y_{\mu}$ is injective by \eqref{eq:Thm C gut}, we  proceed by contradiction:
\begin{itemize}
  \item [(*)] We suppose that $\{\Gamma_1,\Gamma_2,\dots,\Gamma_n,\dots\}$ is an unbounded set of periodic trajectories of $Y_{\mu}$ such that
    \[
    \overline{D}(\Gamma_1)\subset\overline{D}(\Gamma_2)\subset\cdots\subset\overline{D}(\Gamma_n)\subset\cdots
    \]
\end{itemize}
Under these conditions, by using the Green Theorem  in $\overline{D}(\Gamma_n)$ and the arc length element $ds,$ we obtain that
\[
{\int_{\overline{D}(\Gamma_n)}}\mbox{\rm
Trace}(DY_{\mu})dx\wedge dy=\oint_{\Gamma_n}\langle Y_{\mu}(s),\eta_n^{e}(s)\rangle ds,
\]
where $\eta_n^{e}(p)$ is the unitary outer normal vector to $\Gamma_n$ and $\langle Y_{\mu}(s),\eta_n^{e}(s)\rangle$ is the inner product of $Y_{\mu}(s)$ with $\eta_n^{e}(s)$.
Thus,
\[
\mathcal{I}(Y_{\mu})=\lim_{n\to\infty}{\int_{\overline{D}(\Gamma_n)}}\mbox{\rm
Trace}(DY_{\mu})dx\wedge dy=0,
\]
provided by $\langle Y_{\mu}(s),\eta_n^{e}(s)\rangle=0$, for all $\eta_n^{e}(s)$. This contradiction with $\mathcal{I}(Y_\mu)\neq0$ shows that (*) never happens, and gives the proof of (a.4).

Furthermore, since $[-\varepsilon,\varepsilon]$ is a compact set, the following is directly obtained applying (a.4):

\begin{itemize}
  \item[(a.5)] $\exists s>s_0>\sigma$ such that $Y_{\mu}$ has no periodic trajectories in $\mathbb{R}^2\setminus\overline{D}_s$ as long as $\mu\in(-\varepsilon,\varepsilon)$ and $\mathcal{I}(Y_\mu)\neq0$.
\end{itemize}

Hence, $Y_\mu$ generates a positive semi--flow on $\mathbb{R}^2\setminus\overline{D}_s$, for all $-\varepsilon<\mu<0$. In addition, for $s>\sigma$ as in (a.5) we have that for all $\mu\in(-\varepsilon,\varepsilon)$ the following hold:

\begin{itemize}
\item[(a.6)] $Y_\mu(0)=0$;
\item[(a.7)] $\mbox{det}(DY_{\mu}(z))>0$, $\forall z\in\mathbb{R}^2$, so $Y_\mu$ preserves orientation;
\item[(a.8)] there exists $c>0$ such that $||Y_\mu(z)||>c$, for any $z\in \mathbb{R}^2\setminus{\overline{D}_{s}}$, by the openness of $Y_\mu$.

\end{itemize}

Under these properties: \eqref{eq:Thm C gut} and  (a.1) to (a.8), the vector field  $Y_{\mu}$ satisfies all the conditions of \cite[Theorem 26]{MR2287882}. Consequently:
\begin{itemize}
\item[(a.9)]
    For every $r\geq s$ there exist a closed curve $C_r$ transversal to $Y_{\mu}$ contained in the regular set $\mathbb{R}^2\setminus \overline{D}_r$. In particular, $D(C_r)$ contains $D_r$ and $C_r$ has transversal contact to each small local integral curve of $Y$ at any $p\in C_r$.
\end{itemize}
Moreover, for  $r\geq s$ large enough the closed curve $C_r\subset\mathbb{R}^2$  is transversal to $Y_{\mu}$, near $\infty$ and it is such that $\mu\in(-\varepsilon,\varepsilon)$, the index $\mathcal{I}(Y_{\mu})$ and
\[
{\int_{\overline{D}(C_r)}}\mbox{\rm Trace}(DY_{\mu})dx\wedge dy=\oint_{C_r}\langle Y_{\mu}(s),\eta_n^{e}(s)\rangle ds
\]
have the same sign. Thus, the point at infinity of the Riemann sphere $\mathbb{R}^2\cup\{\infty\}$ is either an attractor or a repellor of $Y_{\mu}:\mathbb{R}^2\setminus\overline{D}_s\to\mathbb{R}^2$.
Actually, by (a.9) and \cite[Theorem 28]{MR2287882} we obtain that:
\begin{enumerate}
\item[(a.10)] If $\mathcal{I}(Y_{\mu})<0$ (respectively $\mathcal{I}(Y_{\mu})>0$), then $\infty$ is a repellor (respectively  an attractor) of the vector field $Y_{\mu}:\mathbb{R}^2\setminus\overline{D}_s\to\mathbb{R}^2$.
\end{enumerate}
It concludes the proof, and the proposition holds.
\end{proof}


\subsection{Hopf bifurcation at infinity for dissipative vector fields} \label{subsec: hopf bif}%

We are now ready to state our main theorem.

\begin{thm}\label{main 1}%
Consider the family $\displaystyle{\{X_{\mu}(z)=X(z)+\mu z \; : \; \mu\in\mathbb{R}\}}$ where  $X$ is a differentiable vector field in $\mathcal{H}(2,\sigma)$ with some singularity and verifying
\begin{equation} \label{eq:det} 
\mbox{det}(DX_{\mu})>0,\mbox{ for all $\mu$ in some open interval } (0,\epsilon_0).
\end{equation}

Suppose in addition that

\begin{enumerate}

  \item $\exists \varepsilon_0>0$ such that the set $\big\{s_{\mu}\geq\sigma:\mu\in[-\varepsilon_0,\varepsilon_0]\big\}$ induced by strong extensions is bounded;
  \item for each $\mu>0$, $X_{\mu}$ induces a well defined negative semi--flow  and the index $\mathcal{I}(X_{\mu})\in[-\infty,+\infty]$ satisfies $\mu \cdot\mathcal{I}(X_{\mu}) >0$.
\end{enumerate}
Then there are $\varepsilon>0$ and $s> \sigma$ such that the family of vector fields
\[\Big\{X_{\mu}:\mathbb{R}^2\setminus{\overline{D}_s}\to\mathbb{R}^2\; ;\; -\varepsilon<\mu<\varepsilon\Big\}\]
has at $\mu=0$ a Hopf bifurcation at $\infty$.
\end{thm}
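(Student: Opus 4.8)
The plan is to verify, for a single well-chosen pair $(\varepsilon,s)$, the two conditions of Definition \ref{def:hopf}: the absence of singularities of every $X_\mu$ outside a fixed ball, and the reversal of stability at $\infty$ as $\mu$ crosses $0$, governed by the sign of $\mathcal{I}(X_\mu)$. The whole argument should be organized around the globally injective strong extensions supplied by the earlier results, with the stability step modeled on Proposition \ref{prop:main global}.

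\emph{Fixing $\varepsilon$, $s$ and discarding singularities.} First I would combine the determinant hypothesis \eqref{eq:det} with Corollary \ref{cor:3.2BIS} to obtain $\varepsilon_1>0$ so that, for every $\mu\in[-\varepsilon_1,\varepsilon_1]$, the restriction of $X_\mu$ outside $\overline{D}_{s_\mu}$ satisfies Lemma \ref{lem: existence of the index}; in particular it admits a globally injective strong extension $\widetilde{X}_\mu:\mathbb{R}^2\to\mathbb{R}^2$ with $\widetilde{X}_\mu(0)=0$. Setting $\varepsilon=\min\{\varepsilon_0,\varepsilon_1\}$ and invoking hypothesis (1) together with Remark \ref{rem:main}, the set $\{s_\mu:|\mu|\leq\varepsilon\}$ is bounded, so $s:=\sup\{s_\mu:|\mu|\leq\varepsilon\}$ is finite. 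Since $\widetilde{X}_\mu$ is injective with its only zero at $0\in\overline{D}_s$ and agrees with $X_\mu$ on $\mathbb{R}^2\setminus\overline{D}_s$, the field $X_\mu$ has no singularity in $\mathbb{R}^2\setminus\overline{D}_s$ for all $|\mu|<\varepsilon$, which is precisely the second bullet of Definition \ref{def:hopf}.

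\emph{Sign of the index and stability at $\infty$.} By \eqref{eq: translation spectrum}, for $\mu<0$ the spectrum of $X_\mu$ lies in $\{\Re(z)\leq\mu<0\}$, so Corollary \ref{cor:3.2}(b) gives $\mathcal{I}(X_\mu)<0$, while for $\mu>0$ hypothesis (2) forces $\mathcal{I}(X_\mu)>0$; hence $\operatorname{sign}\mathcal{I}(X_\mu)=\operatorname{sign}\mu$ for $\mu\neq0$. To convert the sign of the index into an attractor/repellor statement I would run, for each fixed $\mu\neq0$, the argument of Proposition \ref{prop:main global} on the extension (equivalently on $X_\mu$ outside $\overline{D}_s$, since only the behavior near $\infty$ is relevant). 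For $\mu<0$ the field is dissipative with negative trace, generates a well-defined positive semi-flow, and its extension is injective and orientation preserving with positive determinant; for $\mu>0$ one uses instead the negative semi-flow granted by hypothesis (2) together with time reversal. In both cases the crucial ingredient is the flux identity from Green's Theorem: a nested unbounded sequence of periodic orbits would force the index to vanish, contradicting $\mathcal{I}(X_\mu)\neq0$, so periodic orbits are excluded far from the origin. Feeding these properties into \cite[Theorems 26 and 28]{MR2287882}, exactly as in items (a.4)--(a.10) of Proposition \ref{prop:main global}, yields that $\mathcal{I}(X_\mu)<0$ makes $\infty$ a repellor and $\mathcal{I}(X_\mu)>0$ makes it an attractor. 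Combined with the sign computation, $\infty$ is a repellor for $\mu<0$ and an attractor for $\mu>0$, so with the no-singularity conclusion the family meets both conditions of Definition \ref{def:hopf}.

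\emph{Main obstacle.} The delicate point is uniformity in $\mu$: one must select a single radius $s$ beyond which, simultaneously for all $\mu\in[-\varepsilon,\varepsilon]$, the appropriate semi-flow is defined, no periodic orbits occur, and the transversal curves $C_r$ of \cite[Theorem 26]{MR2287882} exist. This rests on the boundedness of $\{s_\mu\}$ from hypothesis (1) and on a compactness argument over the closed parameter interval, as in (a.5) of Proposition \ref{prop:main global}; controlling the limit $\mu\to0$, where the trace/semi-flow dichotomy degenerates and the two one-sided regimes must be glued, is the subtle part of the proof.
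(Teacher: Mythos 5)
Your proposal is correct and takes essentially the same route as the paper's proof: hypothesis (1) together with the strong--extension machinery (Corollary \ref{cor:3.2BIS}, Remark \ref{rem:main}) gives a uniform radius $s$, absence of singularities outside $\overline{D}_s$ follows from global injectivity of the extensions (with $\widetilde{X}_\mu(0)=0$), the sign of $\mathcal{I}(X_\mu)$ comes from Corollary \ref{cor:3.2} for $\mu<0$ and from hypothesis (2) for $\mu>0$, and the attractor/repellor dichotomy comes from the Green's--theorem/no--periodic--orbit argument and \cite[Theorems 26 and 28]{MR2287882} exactly as in Proposition \ref{prop:main global}. The only organizational difference is that the paper first glues a single global extension $\widehat{X}_0$ of $X$ and works with the genuinely global family $\widehat{X}_\mu(z)=\widehat{X}_0(z)+\mu z$, which places it literally in the setting of Section \ref{sec: global case} so that Propositions \ref{prop:without sing} and \ref{prop:main global} can be invoked as stated, whereas you keep a separate extension for each $\mu$ and therefore must rerun the proof of Proposition \ref{prop:main global} for each fixed $\mu\neq0$ --- an equivalent bookkeeping choice resting on the same underlying injectivity and index arguments.
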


\begin{proof}
By the assumption (1) there exist a number $\tilde{s}=\sup\{s_{\mu}\geq\sigma:-\epsilon_0\leq\mu\leq\epsilon_0\}$ and  a local diffeomorphism $\widehat{X}_0:\mathbb{R}^2\to\mathbb{R}^2$ with $\widehat{X}_0(0)=0$ such that
\begin{equation}\nonumber 
\widehat{X}_0(z)+\mu z=X_{\mu}(z),\quad\mbox{for all}  \; z\in\mathbb{R}^2\setminus\overline{D}_{\tilde{s}} \quad\mbox{and } \; \mu\in(-\epsilon_0,\epsilon_0).
\end{equation}

In addition, the map $\widehat{X}_\mu:\Real^2\to \Real^2$ such that $\widehat{X}_\mu(z)=\widehat{X}_0(z)+\mu z$ is also a local diffeomorphism. Therefore, there exist $s\geq\tilde{s}\geq\sigma$ and $0<\varepsilon<\epsilon_0$ such that for every $\mu\in[-\varepsilon,\varepsilon]$:

\begin{itemize}
\item The restriction $X_{\mu}|_{\mathbb{R}^2\setminus{\overline{D}_{s}}}:\mathbb{R}^2\setminus{\overline{D}_{s}}\to\mathbb{R}^2$ has no singularities in $\mathbb{R}^2\setminus{\overline{D}_{s}}$. By Proposition \ref{prop:without sing}.
\item $\mu \cdot I(X)>0$, for all $\mu<0$. By Corollary \ref{cor:3.2}.
\item For $\mu<0$, the restriction $X_{\mu}|_{\mathbb{R}^2\setminus{\overline{D}_{s}}}$ has a repellor at $\infty$, and for $\mu>0$,  the restriction $X_{\mu}|_{\mathbb{R}^2\setminus{\overline{D}_{s}}}$ has an attractor at $\infty$.  By hypotheses (2) and Proposition \ref{prop:main global}.
\end{itemize}
Thus,
$\big\{X_{\mu}:\mathbb{R}^2\setminus{\overline{D}_s}\to\mathbb{R}^2;-\varepsilon<\mu<\varepsilon\big\}$
has at $\mu=0$ a Hopf bifurcation at $\infty$.
\end{proof}

Observe that the vector fields $X_{\mu}(z)=X(z)+\mu z$ as in \mbox{Theorem \ref{main 1}} satisfy
\[
X_{\mu}(z)-X_{\tilde{\mu}}(z)=(\mu-\tilde{\mu})z, \quad \forall z\in \mathbb{R}^2\setminus{\overline{D}_\sigma}.
\]
Therefore, if $\mathfrak{X}(\mathbb{R}^2\setminus{\overline{D}_\sigma})$ is the space of the continuous vector fields of $\mathbb{R}^2\setminus{\overline{D}_\sigma}$ endowed with the topology induced by the uniform convergence on compact sets, the functional
\[
\mathbb{R}\ni\mu \mapsto X_{\mu}\in\mathfrak{X}(\mathbb{R}^2\setminus{\overline{D}_\sigma})
\]
is continuous.

\begin{rem}\label{rem:discontinuity}
If the maps $h_{\mu}:\mathbb{R}^2\setminus{\overline{D}_\sigma}\to(0,+\infty)$ and $\tilde{h}_{\mu}:\mathbb{R}^2\setminus{\overline{D}_\sigma}\to(-\infty,0)$ are differentiable and
the family  $\big\{X_{\mu}:\mathbb{R}^2\setminus{\overline{D}_s}\to\mathbb{R}^2;-\varepsilon<\mu<\varepsilon\big\}$ is given by \mbox{Theorem \ref{main 1}}, then the families

\[
h_{\mu}(z)X_{\mu}(z)
\quad
\mbox{and}
\quad
\tilde{h}_{\mu}(z)X_{\mu}(z)
\]
also have at $\mu=0$ a Hopf bifurcation at $\infty$. For instance, the map
\[
h_{\mu}(z)=\left\{
             \begin{array}{ll}
               \displaystyle\frac{1}{\mu}, & \hbox{ if } \mu\neq0; \\
               1, & \hbox{ if } \mu=0
             \end{array}
           \right.
\]
Notice that $h_{\mu}$ induces a well--defined map  $\mu \mapsto h_{\mu} X_{\mu}$ which is  discontinuous at $\mu=0$.
\end{rem}
In some sense, \mbox{Remark \ref{rem:discontinuity}} shows that \mbox{Theorem \ref{main 1}} includes the results where the strong domination imposed by the linear part is used.
\par
Furthermore, we do not assume the existence of some open neighborhood of infinity free of singularities, as in  Theorem \ref{teoAGG07} item (4). Recall that the results in this work are obtained from the weak hypothesis on the Jacobian determinant like \eqref{eq:det}, which is natural in order to work with isolated singularities. In addition, the assumption (2) in Theorem \ref{main 1} is necessary,  since the theorem considers the differentiable vector fields, not necessarily of class $C^1$.

In the particular case of $C^1-$vector fields, Theorem \ref{main 1} directly gives the next illustrative result.

\begin{cor}\label{cor:1}%
Consider  $Z\in\mathcal{H}(2,\sigma)$ of class $C^1$. Suppose $Z$ has a singularity  and the maps $\mathbb{R}^2\setminus{\overline{D}_\sigma}\ni z\mapsto  Z_{\mu}(z)=Z(z)+\mu z$ are orientation preserving local diffeomorphisms, for all $\mu$ in some interval $(0,\epsilon_0)$.

If assumption (1) in Theorem \ref{main 1} holds, then the condition
\begin{equation}\label{eq:index}
  \mu\neq0\Rightarrow\mu \cdot\mathcal{I}(Z_{\mu}) >0
\end{equation}
implies the existence of $\varepsilon>0$ and $s> \sigma$ such that
$\Big\{Z_{\mu}:\mathbb{R}^2\setminus{\overline{D}_s}\to\mathbb{R}^2;-\varepsilon<\mu<\varepsilon\Big\}$
has at $\mu=0$ a Hopf bifurcation at $\infty$.
\end{cor}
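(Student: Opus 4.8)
The plan is to deduce Corollary \ref{cor:1} directly from Theorem \ref{main 1} by verifying that the $C^1$ hypotheses stated here imply, with $X=Z$, all the assumptions of that theorem. Since $Z$ is assumed to be of class $C^1$ and to lie in $\mathcal{H}(2,\sigma)$ with a singularity, the standing hypotheses on $X$ are already met, so it remains to check condition \eqref{eq:det} together with assumptions (1) and (2).

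First I would verify \eqref{eq:det}. By hypothesis, for each $\mu\in(0,\epsilon_0)$ the map $z\mapsto Z_{\mu}(z)=Z(z)+\mu z$ is an orientation-preserving local diffeomorphism on $\mathbb{R}^2\setminus{\overline{D}_\sigma}$. For a $C^1$ map, the inverse function theorem shows that being a local diffeomorphism is equivalent to $\det(DZ_{\mu}(z))\neq0$ for every $z$, and the orientation-preserving requirement forces $\det(DZ_{\mu}(z))>0$. Hence $\det(DZ_{\mu})>0$ on the whole domain for every $\mu\in(0,\epsilon_0)$, which is exactly \eqref{eq:det}.

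Assumption (1) of Theorem \ref{main 1} is taken as a hypothesis of the corollary, so nothing has to be done there. For assumption (2) I would argue as follows. The index requirement $\mu\cdot\mathcal{I}(Z_{\mu})>0$ for $\mu>0$ is precisely the positive-$\mu$ part of the standing condition \eqref{eq:index}; and for $\mu<0$, Corollary \ref{cor:3.2}(b) already gives $\mathcal{I}(Z_{\mu})<0$, so the product is positive there too, making the $\mu<0$ part of \eqref{eq:index} automatic. The remaining piece of assumption (2) is that, for each $\mu>0$, the field $Z_{\mu}$ induces a well-defined negative semi--flow. This is where the $C^1$ hypothesis does the work: since $Z$ is $C^1$, every $Z_{\mu}=Z+\mu z$ is again $C^1$, and the classical existence-and-uniqueness theorem for ordinary differential equations with $C^1$ right-hand side guarantees that through each point there passes a unique negative integral curve, so the negative semi--flow is automatically well defined.

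With \eqref{eq:det} and assumptions (1)--(2) all verified, Theorem \ref{main 1} applies to the family $\{Z_{\mu}\}$ and yields $\varepsilon>0$ and $s>\sigma$ for which $\big\{Z_{\mu}:\mathbb{R}^2\setminus{\overline{D}_s}\to\mathbb{R}^2;-\varepsilon<\mu<\varepsilon\big\}$ has a Hopf bifurcation at $\infty$ at $\mu=0$, which is the assertion. The only genuinely substantive point — and the reason this is an \emph{illustrative} specialization rather than a new argument — is the observation that, in the $C^1$ setting, the well-definedness of the negative semi--flow (an explicit hypothesis in the merely differentiable Theorem \ref{main 1}) comes for free from uniqueness of solutions; everything else is a routine matching of hypotheses.
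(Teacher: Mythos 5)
Your proposal is correct and follows essentially the same route as the paper, which derives the corollary directly from Theorem \ref{main 1}: the orientation-preserving local diffeomorphism hypothesis yields \eqref{eq:det}, assumption (1) is hypothesized, the index condition in assumption (2) is contained in \eqref{eq:index}, and the negative semi--flow requirement is automatic in the $C^1$ setting by classical uniqueness of solutions (precisely the point the paper makes when it remarks that assumption (2) is only needed because Theorem \ref{main 1} allows merely differentiable fields). Your write-up simply makes explicit the hypothesis-matching that the paper leaves implicit.
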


Observe that condition \eqref{eq:index} extends the case where the spectrum of $X_{\mu}$ crosses the imaginary axis transversally when $\mu$ moves from negative to positive values.


\section{Vector fields free of real eigenvalues} \label{sec:free eigen}%

This subsection addresses the special case of differentiable vector fields whose Jacobian matrix is free of real eigenvalues. These vector fields induce local diffeomorphisms with zero divergence.

\begin{prop} \label{cor:3.3}
Let $X:\mathbb{R}^2\setminus{\overline{D}_\sigma}\to\mathbb{R}^2$ be a differentiable vector field with some singularity. Suppose that the following holds
\begin{equation} \label{eq:spectrum free divergence}
\mbox{\rm  Spc}(X)\subset\Big\{z\in\mathbb{C}:\Re(z)=0\Big\}\setminus\big\{(0,0)\big\}.
\end{equation}
Then, $\forall \mu\in \Real$  $\exists s_{\mu}\geq \sigma$  such that
\begin{itemize}
\item[(a)] the restriction  $X|_{{\mathbb{R}^2\setminus \overline{D}_{s_\mu}}}:{{\mathbb{R}^2\setminus \overline{D}_{s_\mu}}}\to \mathbb{R}^2$ is injective and admits a global differentiable extension $\widehat{X}:\Real^2\to \Real^2$ with $\widehat{X}(0)=0$ and $$\widehat{X}(z)=X(z), \;\forall z\in \mathbb{R}^2\setminus \overline{D}_{s_\mu};$$
  \item[(b)] the index $\mathcal{I}(X)$ at infinity is a well defined number in $[-\infty,+\infty)$;
\item [(c)] $X_{\mu}$ induces a well defined positive (respectively negative) semi--flow as long as $\mu<0$ (respectively $\mu>0$). Moreover, the trajectories of $X$ are unique in the sense that only depend of the initial condition.
\end{itemize}
\end{prop}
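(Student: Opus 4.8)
The plan is to observe that hypothesis \eqref{eq:spectrum free divergence} is a \emph{borderline} case of the dissipative setting, so that the constructions of the previous section apply almost verbatim. Indeed, purely imaginary nonzero eigenvalues $\pm i\beta$ force $\mbox{Trace}(DX)=0$ and $\det(DX)=\beta^{2}>0$, and in particular they yield the two inclusions $\mbox{Spc}(X)\cap[0,+\infty)=\emptyset$ and $\mbox{Spc}(X)\subset\{z\in\mathbb{C}:\Re(z)\leq0\}$. These are exactly---and are the only---spectral facts invoked at the beginning of the proof of Lemma \ref{lem: existence of the index}; the strict inequality $\mbox{div}(X)<0$ defining $\mathcal{H}(2,\sigma)$ is never used there.

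For items (a) and (b) I would therefore rerun the proof of Lemma \ref{lem: existence of the index} word for word. The boundedness of the half--Reeb components of $\mathcal{F}(f)$ and $\mathcal{F}(g)$, the embedded curve $C$ with its exterior collar, the Schoenflies extension across $\overline{D}(C)$, and the injectivity conclusion via \cite[Proposition 1.4]{MR2096702} all survive unchanged, producing $s_{0}\geq\sigma$, a globally injective local homeomorphism fixing the origin, and then, by \cite[Theorem 11]{MR2287882}, a global differentiable extension $\widehat{X}$ with $\widehat{X}(0)=0$ that agrees with $X$ off $\overline{D}_{s_{0}}$ and has Lebesgue almost--integrable divergence. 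This is precisely (a), and the same citation yields the well--defined index in $[-\infty,+\infty)$, giving (b). I would note in passing that, since $\mbox{Trace}(DX)\equiv0$ on $\mathbb{R}^{2}\setminus\overline{D}_{s_{0}}$, the integral defining $\mathcal{I}(X)$ is supported on the compact disc $\overline{D}_{s_{0}}$.

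For item (c) I would feed the translation identity \eqref{eq: translation spectrum} into the uniqueness theory of \cite{MR2287882}. It gives $\mbox{Spc}(X_{\mu})\subset\{z\in\mathbb{C}:\Re(z)=\mu\}$. When $\mu<0$ we have $\mbox{Spc}(X_{\mu})\subset\{z\in\mathbb{C}:\Re(z)=\mu<0\}$, so the uniqueness of positive semi--trajectories \cite[Teorema 4]{MR2287882}---already used in step (a.3) of Proposition \ref{prop:main global}---furnishes a well--defined positive semi--flow on a suitable $\mathbb{R}^{2}\setminus\overline{D}_{s_{\mu}}$. When $\mu>0$ I would apply the same theorem to $-X_{\mu}$, whose spectrum has real part $-\mu<0$; its positive semi--flow is by definition the negative semi--flow of $X_{\mu}$. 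For the uniqueness of the full trajectories of $X$, I note that \eqref{eq:spectrum free divergence} excludes real eigenvalues entirely, so simultaneously $\mbox{Spc}(X)\cap[0,+\infty)=\emptyset$ and $\mbox{Spc}(X)\cap(-\infty,0]=\emptyset$: the first yields forward uniqueness and the second, applied to $-X$, yields backward uniqueness, and together they determine each trajectory from its initial condition. Taking $s_{\mu}$ to be the larger of $s_{0}$ and the radius supplied by the semi--flow step then fulfils the statement.

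The step I expect to be the \emph{main obstacle} is (c). In the merely Frechet--differentiable category one cannot invoke Picard--Lindel\"of, so the very existence and uniqueness of the semi--flows rests on the spectral uniqueness theorems of \cite{MR2287882,MR2096702}; one must be careful both to convert their positive--semiflow formulation into the negative--semiflow assertion for $\mu>0$ by time reversal, and to choose $s_{\mu}$ large enough to be simultaneously compatible with the extension produced in (a) and free of the (at most one) singularity of $X_{\mu}$.
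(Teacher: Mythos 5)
Your handling of items (a), (b) and of the semi--flow half of (c) is essentially the paper's own argument. The paper likewise reduces (a) and (b) to ``the proof of Lemma \ref{lem: existence of the index} holds verbatim''; the only difference is bookkeeping: it secures the bounded half--Reeb components by citing \cite[Proposition 2.7]{MR2266382} under $\mbox{\rm Spc}(X_{\mu})\cap\mathbb{R}=\emptyset$, while you check directly that the two spectral inclusions used at the start of that lemma's proof still hold; both routes are legitimate. For the semi--flows with $\mu\neq0$, the paper uses \eqref{eq: translation spectrum} to place $\mbox{\rm Spc}(X_{\mu})$ (resp. $\mbox{\rm Spc}(-X_{\mu})$) in the open left half--plane and then invokes \cite[Lemma 3.3]{MR2096702}, where you invoke \cite[Teorema 4]{MR2287882}; since $\mbox{\rm Spc}(X_{\mu})\subset\{\Re(z)=\mu\}$ contains no real eigenvalues at all, either citation applies, and your time--reversal trick for $\mu>0$ is exactly the paper's.

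The gap is in the final claim of (c), the uniqueness of the trajectories of $X$ itself. You assert the principle ``$\mbox{\rm Spc}\cap[0,+\infty)=\emptyset$ yields forward uniqueness'' and apply it to $X$ and to $-X$. No result invoked in this paper has that hypothesis: \cite[Lemma 3.3]{MR2096702} and \cite[Teorema 4]{MR2287882}, as they are used here (see step (a.3) in the proof of Proposition \ref{prop:main global}), require the spectrum to lie in the open left half--plane, in the latter case with real eigenvalues additionally bounded away from $0$. The vector field $X$ of this proposition has $\Re\big(\mbox{\rm Spc}(X)\big)\equiv0$ by \eqref{eq:spectrum free divergence}, so it satisfies neither hypothesis, and mere avoidance of $[0,+\infty)$ is strictly weaker than both (it even tolerates non--real eigenvalues with positive real part), so your forward/backward decomposition is an unproved assertion, not a consequence of the cited theorems. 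This is precisely the point where the paper changes references: uniqueness of the trajectories of $X$ is deduced from \cite[Lemma 2.1]{MR2448587}, whose hypothesis is the genuinely two--sided condition $\mbox{\rm Spc}\cap\mathbb{R}=\emptyset$ and which is not obtained by splitting into two half--line avoidances, each delivering uniqueness in one time direction. Ironically, you derived exactly that condition (``excludes real eigenvalues entirely''), so the repair is a one--line substitution: drop the decomposition and cite \cite[Lemma 2.1]{MR2448587} directly.
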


\begin{proof}
Since $X_{\mu}(z)=X(z)+\mu z$ implies that
$\mbox{\rm  Spc}(X_\mu)\cap\mathbb{R}=\emptyset$ for all $\mu\in\mathbb{R}$. Proposition 2.7 in \cite{MR2266382}, implies that $X_{\mu}=(f_{\mu},g_{\mu})$ has the following property:
\begin{itemize}
\item[(a.1)] Any half-Reeb component of either $\mathcal{F}(f_{\mu})$ or $\mathcal{F}(g_{\mu})$ is a bounded set.
\end{itemize}
Therefore, the proof of Lemma \ref{lem: existence of the index} holds verbatim and statements (a) and (b) remain true.

In addition, \eqref{eq: translation spectrum} implies that:
\[
\begin{array}{ccl}
  \mu<0 & \Rightarrow & \mbox{Spc}(X_{\mu}) \subset \big\{z\in\mathbb{C}:\Re(z)\leq\mu<0\big\},\\
  \mu>0 & \Rightarrow & \mbox{Spc}(-X_{\mu}) \subset \big\{z\in\mathbb{C}:\Re(z)\leq-\mu<0\big\}.
\end{array}
\]
Therefore, \cite[Lemma 3.3]{MR2096702} implies that the vector fields $X_{\mu}$ with $\mu<0$, and $-X_{\mu}$ with $\mu>0$, induce a well defined positive semi--flow.
Similarly, \cite[Lemma 2.1]{MR2448587} gives the uniqueness of the trajectories induced by $X$.
Thus, statement (c) holds.
\end{proof}

\begin{prop}\label{prop:div 0}
Consider the family of differentiable vector fields $\{X_{\mu}(z)=X(z)+\mu z : \mu\in \Real\}$, where $X$ verifies hypotheses of Proposition \ref{cor:3.3}.

If $\exists \varepsilon_0>0$ such that the set $\big\{s_{\mu}\geq\sigma:\mu\in[-\varepsilon_0,\varepsilon_0]\big\}$ induced by strong extensions is bounded, then $\exists s>\sigma$ and $\varepsilon>0$ such that the family
\[\big\{X_{\mu}:\mathbb{R}^2\setminus{\overline{D}_s}\to\mathbb{R}^2;-\varepsilon<\mu<\varepsilon\big\}\]
    has at $\mu=0$ a Hopf bifurcation at $\infty$.
\end{prop}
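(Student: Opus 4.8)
The plan is to reproduce the scheme of the proof of Theorem \ref{main 1}, with dissipativity replaced by the boundary hypothesis \eqref{eq:spectrum free divergence}; the gain is that this hypothesis makes the index condition automatic. The starting point is that a purely imaginary spectrum forces $\mbox{Trace}(DX)\equiv0$ on $\mathbb{R}^2\setminus\overline{D}_\sigma$, so by \eqref{eq: translation spectrum} the eigenvalues of $DX_\mu$ are $\mu\pm i\beta(z)$ with $\beta(z)\neq0$, giving
\[
\mbox{Trace}(DX_\mu)\equiv2\mu
\qquad\text{and}\qquad
\mbox{det}(DX_\mu)=\mu^2+\beta(z)^2>0.
\]
Consequently each $X_\mu$ is an orientation preserving local diffeomorphism, and Proposition \ref{cor:3.3} already supplies, for every $\mu$, an injective global (strong) extension $\widehat{X}_\mu$ with $\widehat{X}_\mu(0)=0$, a well defined index, and the one--sided semi--flows---positive for $\mu<0$ and negative for $\mu>0$. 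Only two things then remain: a uniform singularity--free neighborhood of infinity and the reversal of stability, both controlled by the sign of $2\mu$.

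First I would fix the radius by setting $s=\sup\{s_\mu:\mu\in[-\varepsilon_0,\varepsilon_0]\}$, which is finite by hypothesis. For the singularity--free clause of Definition \ref{def:hopf} I would argue directly rather than through Proposition \ref{prop:without sing} (whose statement is confined to the class $\mathcal{H}(2,\sigma)$, to which $X=X_0$ does not belong since its divergence vanishes): as $\widehat{X}_\mu$ is injective with $\widehat{X}_\mu(0)=0$, it has no zero off the origin, and since it coincides with $X_\mu$ on $\mathbb{R}^2\setminus\overline{D}_{s_\mu}$, the field $X_\mu$ is free of singularities in $\mathbb{R}^2\setminus\overline{D}_s$ for every $\mu\in[-\varepsilon_0,\varepsilon_0]$.

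Next I would compute the index. Because $\mbox{Trace}(D\widehat{X}_\mu)$ equals the constant $2\mu$ off the compact disc $\overline{D}_s$ and is bounded on it, the Lebesgue almost--integral splits as a finite number plus $\int_{\mathbb{R}^2\setminus\overline{D}_s}2\mu$, whence
\[
\mathcal{I}(X_\mu)=-\infty\ (\mu<0),
\qquad
\mathcal{I}(X_\mu)=+\infty\ (\mu>0).
\]
In particular $\mu\cdot\mathcal{I}(X_\mu)>0$ for all $\mu\neq0$, so the index requirement of Theorem \ref{main 1} holds for free---this is exactly where \eqref{eq:spectrum free divergence} substitutes for an extra assumption. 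For the stability I would then rerun the proof of Proposition \ref{prop:main global} on the dissipative fields produced by shifting: for $\mu<0$ the field $X_\mu$ itself is dissipative with $\mathcal{I}(X_\mu)=-\infty$ and carries a positive semi--flow, so the Green's theorem argument there confines its periodic orbits and \cite[Theorems 26 and 28]{MR2287882} yield a repellor at $\infty$; for $\mu>0$ I would apply the identical argument to the time--reversed field $-X_\mu$, which is dissipative with $\mathcal{I}(-X_\mu)=-\infty$ and carries a positive semi--flow (as $X_\mu$ has a negative one by Proposition \ref{cor:3.3}(c)), obtaining a repellor at $\infty$ for $-X_\mu$ and hence, by Definition \ref{def:atractor repellor}, an attractor at $\infty$ for $X_\mu$. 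The passage from a repellor for $\mu<0$ to an attractor for $\mu>0$, together with the singularity--free neighborhood of infinity, is a Hopf bifurcation at $\infty$ in the sense of Definition \ref{def:hopf}.

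The main obstacle is precisely that the index is infinite: since the divergence is a nonzero constant near infinity, $\mathcal{I}(X_\mu)=+\infty$ when $\mu>0$ lies outside the range $[-\infty,+\infty)$ for which Proposition \ref{prop:main global} is stated, so that proposition cannot be quoted verbatim on the $\mu>0$ side. The time--reversal above is the device that removes this: replacing $X_\mu$ by $-X_\mu$ trades the forbidden value $+\infty$ for the admissible $-\infty$ and at the same time turns the negative semi--flow of Proposition \ref{cor:3.3}(c) into the positive semi--flow demanded by the transversality construction of \cite[Theorem 26]{MR2287882}. What must then be checked is merely the bookkeeping---that reversing time swaps attractor and repellor at $\infty$, immediate from Definition \ref{def:atractor repellor} as it interchanges $\omega$-- and $\alpha$--limit sets, and that the radius $s$ can be chosen uniformly, which is guaranteed by the boundedness of $\{s_\mu\}$.
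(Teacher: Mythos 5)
Your proof is correct, and its skeleton is the same as the paper's: the paper disposes of this proposition in one line, ``Follows by Proposition \ref{cor:3.3} and Proposition \ref{prop:main global}.'' What you add, and what makes your write-up more than a paraphrase, is the repair of two points at which that one-line citation does not literally apply. First, you observe that $\mathrm{Trace}(DX_\mu)\equiv 2\mu$ forces $\mathcal{I}(X_\mu)=+\infty$ for $\mu>0$, which falls outside the range $[-\infty,+\infty)$ demanded by hypothesis (2) of Proposition \ref{prop:main global}; moreover the base field, having identically vanishing divergence, is not in $\mathcal{H}(2,\sigma)$, so neither Proposition \ref{prop:without sing} nor Proposition \ref{prop:main global} can be quoted verbatim (the paper is itself inconsistent here, since Theorem \ref{main 1} invokes Proposition \ref{prop:main global} while allowing indices in $[-\infty,+\infty]$). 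Your time-reversal device---run the dissipative argument of Proposition \ref{prop:main global} on $-X_\mu$, whose index is $-\infty$ and whose positive semi-flow comes from Proposition \ref{cor:3.3}(c), then swap attractor and repellor via Definition \ref{def:atractor repellor}---is exactly the missing step that makes the $\mu>0$ half rigorous, and your direct injectivity argument ($\widehat{X}_\mu$ injective and fixing the origin) correctly replaces the appeal to Proposition \ref{prop:without sing} for the singularity-free clause of Definition \ref{def:hopf}. One small imprecision: you justify the splitting of the index integral by saying the trace of the extension is \emph{bounded} on the compact disc, which need not hold for a merely differentiable extension; the correct justification is that the extension furnished by Proposition \ref{cor:3.3} is chosen with Lebesgue almost--integrable trace, and since the trace equals the constant $2\mu$ off a compact set, almost--integrability forces the sign-opposite part to have finite integral, giving $\mathcal{I}(X_\mu)=\mathrm{sign}(\mu)\cdot\infty$ all the same. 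In short, both proofs travel the same road; yours fills in the gaps the paper passes over.
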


\begin{proof}Follows by Proposition \ref{cor:3.3} and Proposition \ref{prop:main global}.
\end{proof}

Observe that a similar result as described in Remark \ref{rem:discontinuity} is obtained in the case of families given by \mbox{Proposition \ref{prop:div 0}}.

\begin{figure}[htb]
  \centering
  \psfrag{D}{$\overline{D}_s$}
   \psfrag{In}{$\infty$}
    \psfrag{c}{$\mu=0$}
     \psfrag{m}{$\mu<0$}
      \psfrag{M}{$\mu>0$}
  \includegraphics[width=12cm]{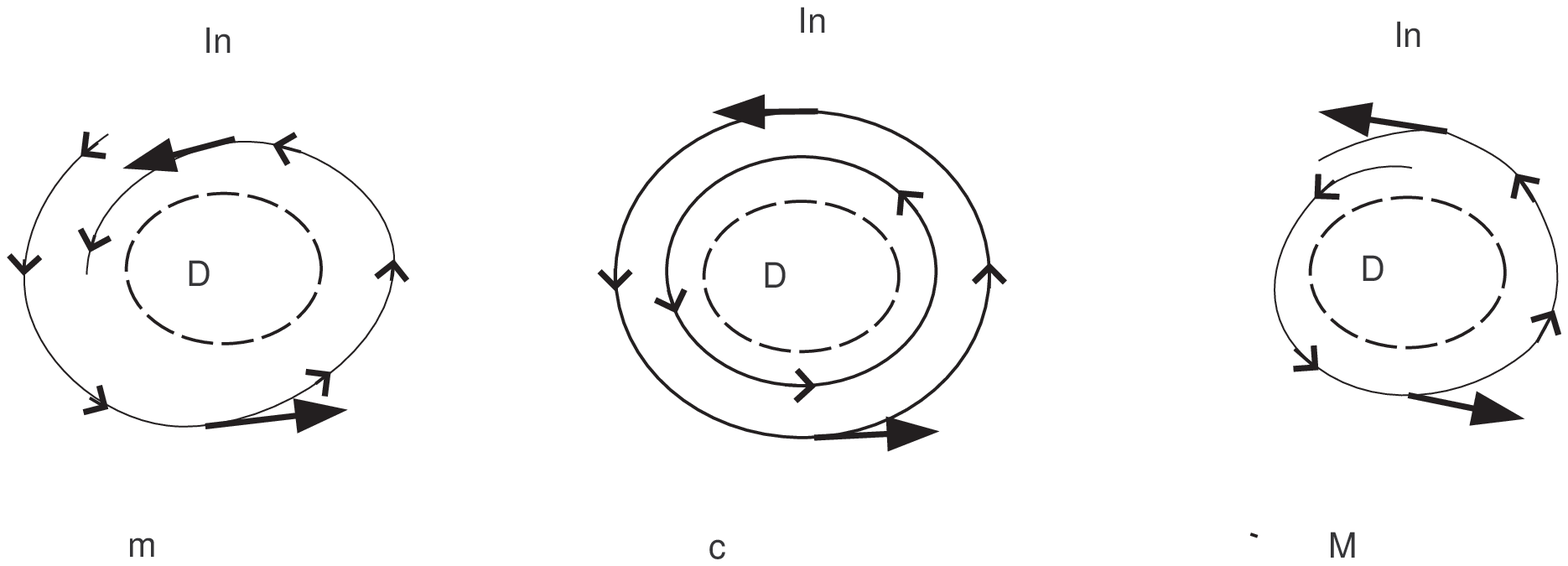}\\
  \caption{\footnotesize The bifurcation in \mbox{Proposition \ref{prop:div 0}}.}\label{fig:mainf proposition}
\end{figure}

\begin{rem}\label{rem:center}
Consider now the special case of planar $C^1-$vector fields $Y:\mathbb{R}^2\to\mathbb{R}^2$ such that  $Y(0)=0$ and
\[
\mbox{\rm  Spc}(Y)\subset\Big\{z\in\mathbb{C}:\Re(z)=0\Big\}\setminus\big\{(0,0)\big\}.
\]
The trajectories in $\mathbb{R}^2\setminus\{0\}$ induced by $Y$ are periodic orbits surrounding the origin \cite{MR2448587,MR3223368}.
Moreover, a direct application of \mbox{Proposition \ref{prop:div 0}} gives the existence of some $\varepsilon>0$, for which the family $\big\{Y_{\mu}:\mathbb{R}^2\setminus{\overline{D}_s}\to\mathbb{R}^2;-\varepsilon<\mu<\varepsilon\big\}$ has at $\mu=0$ a Hopf bifurcation at $\infty$. See Figure \ref{fig:mainf proposition}.
\end{rem}


\section*{Acknowledgements}
The first author was partially supported by grants \textsc{micinn-12-mtm2011-22956 } from Spain  and  CNPq 474406/2013-0 from Brazil.
The second author was partially supported by  Pontif\'{\i}cia Universidad Cat\'{o}lica del Per\'{u} (\textsc{dgi}:70242.0056), by Instituto de Ci\^encias Matem\'aticas e de Computa\c{c}\~ao (\textsc{icmc--usp}: 2013/16226-8).
This paper was written while the second author served as an Associate Fellow at the Abdus Salam  \textsc{ictp} in Italy. He also acknowledges the hospitality of \textsc{icmc--usp} in Brazil during the preparation of part of this work.



\def\cprime{$'$} \def\cprime{$'$}
\providecommand{\bysame}{\leavevmode\hbox to3em{\hrulefill}\thinspace}
\providecommand{\MR}{\relax\ifhmode\unskip\space\fi MR }

\providecommand{\MRhref}[2]{%
  \href{http://www.ams.org/mathscinet-getitem?mr=#1}{#2}
}
\providecommand{\href}[2]{#2}

\end{document}